\definecolor{Red}{rgb}{1,0,0}
\definecolor{halfgray}{gray}{0.55}
\definecolor{webgreen}{rgb}{0,.5,0}
\definecolor{webbrown}{rgb}{.6,0,0}
\definecolor{Maroon}{cmyk}{0, 0.87, 0.68, 0.32}
\definecolor{RoyalBlue}{cmyk}{1, 0.50, 0, 0}
\definecolor{Black}{cmyk}{0, 0, 0, 0}
\newtheorem{thm}{Theorem}[section]
\newtheorem{proposition}[thm]{Proposition}
\newtheorem{lem}[thm]{Lemma}
\newtheorem{remark}[thm]{Remark}
\makeatletter\@addtoreset{equation}{section}\makeatother
\begin{document}

\newcommand{\D}{\mathrm{d}}
\newcommand{\R}{\mathbb{R}}
\newcommand{\E}{\mathbb{E}}
\newcommand{\var}{\operatorname{Var}}
\newcommand{\cov}{\operatorname{Cov}}
\newcommand{\G}{\mathcal{G}}
\newcommand{\B}{\mathcal{B}}
\newcommand{\e}{\mathrm{e}}
\newcommand{\p}{\mathrm{P}}
\newcommand{\N}{\mathbb{N}}
\newcommand{\K}{\mathbb{K}}
\newcommand{\T}{\mathbb{T}}
\newcommand{\bbP}{\mathbb{P}}
\newcommand{\bbE}{\mathbb{E}}
\newcommand{\bfP}{\mathbf{P}}
\newcommand{\bfE}{\mathbf{E}}
\newcommand{\calE}{\mathcal E}
\newcommand{\GW}{\operatorname{GW}}
\newcommand{\Tra}{\operatorname{Tra}}
\newcommand{\RW}{\operatorname{RW}}
\newcommand{\RWP}{\RW\times\bfP}
\newcommand{\F}{\mathcal F}

\pagenumbering{arabic}

\newcommand{\Peaug}{\mathrm{P}_\varepsilon^{\mathrm{aug}}}
\newcommand{\Pehat}{\hat{\mathrm{P}}_\varepsilon}
\newcommand{\PPehat}{\hat{\mathbb{P}}_\varepsilon}
\newcommand{\Eehat}{\hat{\mathrm{E}}_\varepsilon}
\newcommand{\EEehat}{\hat{\mathbb{E}}_\varepsilon}

\title{The speed of random walk on Galton-Watson trees with vanishing conductances}
\author{Tabea Glatzel, Jan Nagel} 
\maketitle

\begin{abstract} 
In this paper we consider random walks on Galton-Watson trees with random conductances. On these trees, the distance of the walker to the root satisfies a law of large numbers with limit the effective velocity, or speed of the walk. We study the regularity of the speed as a function of the distribution of conductances, in particular when the distribution of conductances converges to a non-elliptic limit. 
\end{abstract}

{\bf Keywords:}
Random walk in random environment, Galton-Watson trees, effective velocity

\smallskip

{\bf MSC 2010:} {
60K37, %Processes in random environment
60F15, %Strong limit theorems
60K40 %Other physical applications of random processes  
}

\section{Introduction}
\label{sec:intro}

Let $T$ be a supercritical Galton-Watson tree and let $(X_n)_{n\geq 0}$ be a simple random walk on $T$, starting at the root. It is well known since the work of \cite{LyoPemPer96a}, that $(X_n)_{n\geq 0}$ moves away from the root with a linear rate,  
\begin{align} \label{basicLLN}
\lim_{n\to \infty} \frac{|X_n|}{n} = v \quad \text{a.s.},
\end{align}
where $|v|$ denotes the distance of a vertex $v$ from the root and the limit holds almost surely with respect to the annealed law, that is, averaged over the random walk and the tree, and conditioned on $T$ being infinite. The limit $v$ is a positive constant and is called the effective velocity of the random walk. In fact, $v$ can be given explicitly in terms of the offspring law. Such a law of large numbers still holds for a random walk on a Galton-Watson tree with random conductances. For this process, given the Galton-Watson tree $T$, the edges of the tree are assigned random positive conductances, independently and identically distributed for different edges and the random walk traverses an edge with a probability proportional to the conductance of that edge. Is was proven by \cite{gantert2012random}, that a convergence as in \eqref{basicLLN} still holds, when we now also average over the conductances, with a limit depending on the offspring law of the tree and the law of the conductances. \cite{gantert2012random} also give a formula for the speed as an expectation of ratios of effective conductances. Unfortunately, this means that the speed cannot be explicitly computed even for very simple distributions of conductances. 

In this paper, we study the regularity of the speed as a function of the distribution of the conductances. In particular, we  investigate how the speed changes when the conductances on a positive fraction of edges goes to zero. The regularity of the speed as a function of the local transition probabilities is a prominent question for random walk on Galton-Watson trees. In particular, it has been well studied for biased random walks, when the conductances of edges between the $n$-th and the $(n+1)$-th generation in the tree are multiplied by $\lambda^n$, for a bias parameter $\lambda >0$. The speed then depends in a highly nontrivial way on $\lambda$ \cite{LyoPemPer96b}. For this model, the monotonicity of $v$ as a function of $\lambda$ has been studied by \cite{benarous2014monotonicity}, the behavior for $\lambda$ close to zero by \cite{benarous2013einstein} and differentiability by \cite{bowditch2019differentiability}.  
Closely related are results for random walks in $\mathbb{Z}^d$ with random conducances and bias parameter $\lambda$, as studied by \cite{fribergh2014phase,gantert2017einstein,berger2019speed}, or for the Mott random walk \cite{faggionato2018velocity,faggionato2019regularity}. The regularity of the speed  on the tree as a function of the offspring law was studied in \cite{HHN2020random}, when the offspring law is close to criticality.

Before we present our results, let us introduce the model more precisely. Let $\Omega$ be the set of all tripel $(T,\rho,\xi)$, where $T$ is a tree with root $\rho$ and $\mathcal{E}(T)$ its set of undirected edges and $\xi\in [0,\infty)^{\mathcal{E}(T)}$ is a configuration of conductances on the edges of $T$. An element $\omega\in \Omega$ is called an environment. 
We then let $\mathrm{P}$ be the law on $\Omega$, such that under $\mathrm{P}$, $T$ is a Galton-Watson tree with offspring law $\nu$ and root $\rho$ and conditioned on $T$, the conducances $\xi$ are independent and identically distributed with marginal law $\mu$. Let $N$ be the number of offspring of the root $\rho$. We assume that the tree is supercritical, $\mathrm{E}[N]>1$, it has no leaves, $\mathrm{P}(N=0)=0$ and the second moment $E[N^2]$ is finite. Additionally, the conductances are uniformly elliptic, that is, there exists a $\delta>0$, such that
\begin{align} \label{uniformelliptic}
\mu([\delta,\delta^{-1}])=1 .
\end{align}
For vertices $u,v\in T$, we write $u\sim v$, if $u$ and $v$ are connected by an edge in $T$ and we say that $u$ and $v$ are neighbors. We denote the graph distance of a vertex $v\in T$ to the root $\rho$ by $|v|$. 

Given an environment $\omega=(T,\rho,\xi)$, we let $P_\omega$ be the quenched law of the Markov chain $(X_n)_{n\geq 0}$ on $T$, starting at $\rho$ and with transition probabilities 
\begin{align} \label{quenchedlaw}
P_\omega (X_{n+1}=v | X_n=u) = \frac{\xi(u,v)}{\sum_{w\sim u} \xi(u,w)}
\end{align}
if $u\sim v$. The annealed law $\mathbb{P}$ is then defined by 
\begin{align} \label{annealedlaw}
\mathbb{P}(A\times B) = \int_A P_\omega(B) \mathrm{d} \mathrm{P}(\omega)
\end{align}
for measurable subsets $A\times B$ of the set of all pairs $(\omega,T^{\mathbb{N}_0})$ (as usual, the $\sigma$-algebras are generated by finite-dimensional projections). 

As proven by \cite{gantert2012random}, we then have 
\begin{align} \label{defspeed}
\lim_{n\to \infty} \frac{|X_n|}{n} = v =v(\nu,\mu) \quad \mathbb{P}-\text{almost surely.}
\end{align}
In Section \ref{sec:invariant}, we recall a formula for $v$. We are interested in the behavior of $v(\nu,\mu)$ as a function of $\mu$, the marginal law of the conductances. As a first result, we have that $v$ is continuous, as long as we stay in the framework of uniformly elliptic laws. More precisely, to guarantee the convergence of $v(\nu,\mu_n)$ for a sequence of distributions of conductances $\mu_n$, we require the $\mu_n$ to be uniformly elliptic with the same ellipticity constant $\delta$.

\begin{proposition} \label{prop:cont}
For any $\delta>0$, the mapping $\mu \mapsto v(\nu,\mu)$ is continuous on the set of uniformly elliptic measures satisfying \eqref{uniformelliptic}, equipped with the weak topoplogy. 
\end{proposition}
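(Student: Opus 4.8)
The plan is to exploit the representation of the speed, to be recalled in Section~\ref{sec:invariant}, as the expectation of a bounded functional of the environment built from ratios of effective conductances. Write $v(\nu,\mu)=\mathbb{E}_{\nu,\mu}[F]$, where $\mathbb{E}_{\nu,\mu}$ denotes expectation under the (fixed) Galton--Watson law $\nu$ with conductances conditionally i.i.d.\ of law $\mu$, and $F$ is this effective-conductance functional; if the formula is instead a quotient of two such expectations the argument applies verbatim to numerator and denominator, the denominator being bounded away from zero by uniform ellipticity. Continuity of $\mu\mapsto v(\nu,\mu)$ then reduces to showing $\mathbb{E}_{\nu,\mu_n}[F]\to\mathbb{E}_{\nu,\mu}[F]$ for every sequence $\mu_n\to\mu$ weakly with all $\mu_n,\mu$ supported in the fixed compact interval $[\delta,\delta^{-1}]$. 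The obstruction to applying weak convergence directly is that $F$ depends on the conductances of the whole infinite tree, so it is not a bounded continuous function of finitely many coordinates.

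First I would truncate the tree. Denote by $C_\infty(v)$ the effective conductance from a vertex $v$ to infinity inside its descendant subtree, and by $C_k(v)$ its analogue when that subtree is cut at depth $k$; each $C_k(v)$ is a strictly positive, continuous (indeed rational) function of the finitely many conductances within distance $k$ of $v$, and uniform ellipticity keeps the functional $F$, hence each truncation $F_k$, uniformly bounded. Replacing every effective conductance appearing in $F$ by its depth-$k$ truncation yields a functional $F_k$ depending only on the conductances within distance $k$ of the root. The technical heart of the proof is the estimate
\[
\sup_{\mu([\delta,\delta^{-1}])=1}\ \mathbb{E}_{\nu,\mu}\bigl[\,|F-F_k|\,\bigr]\ \xrightarrow[k\to\infty]{}\ 0 ,
\]
with the supremum over all conductance laws supported in $[\delta,\delta^{-1}]$. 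This is where uniform ellipticity is indispensable: together with supercriticality it forces a geometric rate of decay of the dependence of $C_\infty(v)$ on the tree beyond depth $k$, since the exponentially many parallel descendant paths make the effective resistance of the layers past depth $k$ decay like $\lambda^k$ with $\lambda<1$ and prefactor depending only on $\delta$ and $\nu$.

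Next, for fixed $k$ I would pass to the limit in $\mathbb{E}_{\nu,\mu_n}[F_k]$. Conditioning on the tree $T$, the functional $F_k(T,\cdot)$ is a bounded continuous function of the finitely many conductances lying within distance $k$ of the root, each ranging over the compact set $[\delta,\delta^{-1}]$. Since $\mu_n\to\mu$ weakly implies $\mu_n^{\otimes m}\to\mu^{\otimes m}$ weakly on $[\delta,\delta^{-1}]^m$ for every $m$, we get $\int F_k(T,\xi)\,\mathrm{d}\mu_n^{\otimes m}(\xi)\to\int F_k(T,\xi)\,\mathrm{d}\mu^{\otimes m}(\xi)$ for each fixed $T$. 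Averaging over $T$ under the fixed offspring law $\nu$ and invoking dominated convergence, with dominating constant $\|F_k\|_\infty$ and with $\mathrm{E}[\#\{v:|v|\le k\}]<\infty$ guaranteed by $\mathrm{E}[N^2]<\infty$, yields $\mathbb{E}_{\nu,\mu_n}[F_k]\to\mathbb{E}_{\nu,\mu}[F_k]$.

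Finally I would combine the two steps through the $3\varepsilon$ bound
\[
|v(\nu,\mu_n)-v(\nu,\mu)|\le\mathbb{E}_{\nu,\mu_n}[|F-F_k|]+\bigl|\mathbb{E}_{\nu,\mu_n}[F_k]-\mathbb{E}_{\nu,\mu}[F_k]\bigr|+\mathbb{E}_{\nu,\mu}[|F-F_k|],
\]
fixing $k$ large enough that the first and third terms are uniformly below $\varepsilon$ by the truncation estimate, and then letting $n\to\infty$ to make the middle term vanish for that $k$. I expect the uniform truncation estimate of the second paragraph to be the only genuine difficulty; the remaining ingredients are the standard stability of weak convergence under finite products on a compact space and a routine dominated-convergence argument over the random tree.
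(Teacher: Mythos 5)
Your plan hinges entirely on the uniform truncation estimate $\sup_{\mu}\mathrm{E}_{\nu,\mu}\bigl[|F-F_k|\bigr]\to 0$, and that is precisely where it breaks down as written. The justification you offer --- that uniform ellipticity and supercriticality force the dependence of $\mathcal C_\infty$ on the tree beyond depth $k$ to decay like $\lambda^k$ with a prefactor depending only on $\delta$ and $\nu$ --- is false as a statement uniform over trees and conductances. The paper's assumptions allow $p_1=\nu(\{1\})>0$, so the tree can open with long pipes of single-child vertices. Take unit conductances, let the branch through $v_0$ be a pipe of length $k$ followed by a binary tree, and let the single other branch at the root be a pipe of length $10k$ followed by a binary tree: the true ratio $\mathcal C(T^*)/\mathcal C(T)$ is about $10/11$, while its depth-$k$ truncation is about $1/2$, so $|F-F_k|$ is of order one on an event of probability roughly $p_1^{11k}>0$. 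Hence no pathwise bound with constants depending only on $(\delta,\nu)$ can decay at all; the estimate can only hold after averaging over the tree, because pipes are exponentially rare. Worse, even as a heuristic for the expectation, your electrical mechanism points the wrong way: shorting generation $k$ and applying the series law gives $\mathcal R_\infty \ge \mathcal R_k + \mathcal R(\text{gen } k \to \infty)$, so a small resistance of the ``layers past depth $k$'' is a \emph{lower} bound for the truncation error $\mathcal R_\infty-\mathcal R_k$, not an upper bound. A genuine upper bound comes from extending the optimal unit flow to generation $k$ into the subtrees, which yields $\mathcal R_\infty-\mathcal R_k\le\sum_{|v|=k}\theta_k(v)^2\,\mathcal R_\infty(T_v)$, and controlling this requires both spreading of the harmonic flow (which fails on pipes) and moment bounds on the subtree resistances; none of this is in your sketch.

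The gap is repairable, but not cheaply, and the cleanest repair circles back to the paper's own key ingredient. Since the tree law does not depend on $\mu$, it suffices to dominate $|F-F_k|$ by $G_k(T):=\sup_{\xi\in[\delta,\delta^{-1}]^{\mathcal E(T)}}|F-F_k|$, a function of the tree alone (which makes uniformity in $\mu$ trivial), and to show $G_k(T)\to 0$ a.s., whence $\mathrm{E}[G_k]\to 0$ by dominated convergence. Because $\mathcal C_k\downarrow\mathcal C_\infty$ monotonically by Rayleigh, Dini's theorem would give exactly this on the compact product space $[\delta,\delta^{-1}]^{\mathcal E(T)}$ --- but only once you know that $\xi\mapsto\mathcal C(T)(\xi)$ is continuous there (plus the easy lower bound $\mathcal C(T)(\xi)\ge\delta\,\mathcal C(T)(\mathbf 1)>0$ on the denominators, from transience). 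That continuity statement is the nontrivial step of the paper's proof: there, Dirichlet's principle makes $\mathcal C$ concave in $\xi$, transience makes it locally bounded, and the Roberts--Varberg theorem cited in the paper yields continuity; the paper then avoids truncation altogether by combining this with a coupling that decouples conductances from the tree, a convergence theorem for conditional expectations, and uniform integrability of the ratio (bounded by $\delta^{-1}$). So your route does not bypass the hard qualitative step; it still needs it (or a genuinely quantitative large-deviation argument on Galton--Watson trees), and as proposed that step is missing.
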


We want to investigate the speed when some of the conductances approach zero and define for $\mu$ satisfying \eqref{uniformelliptic} for some $\delta>0$ and for $\varepsilon \geq 0$, $\alpha\in (0,1)$ the measure
\begin{align} \label{defmuepsilon}
\mu_\varepsilon = \alpha \delta_\varepsilon + (1-\alpha) \mu .
\end{align}
Note that for any $\varepsilon>0$, the measure $\mu_\varepsilon$ is still uniformly elliptic. There is a natural way of coupling weighted trees $(T,\xi)$ with marginal conductance law $\mu$ with trees with conductances distributed according to $\mu_\varepsilon$. For each edge in $T$ we toss an independent coin with success probability $\alpha$ to replace the current conductance by $\varepsilon$. For $\varepsilon=0$, the subtree $T_0$ formed by edges with positive conductances might be finite, and then the random walk can only move on a finite tree. In fact, if for $\varepsilon=0$ we have $T_0$ finite with probability 1 (that is, the pruned Galton-Watson process obtained by removing $\varepsilon$-edges is not supercritical), we set $v(\nu,\mu_0)=0$. If $T_0$ is still supercritical, it is infinite with positive probability and then the speed is usually defined when we condition on $T_0$ to be infinite. In this case, and consistently with the definitions in \cite{LyoPemPer96a} and \cite{gantert2012random}, we define the conditioned environment law $\bar{\mathrm{P}} = \mathrm{P}(\cdot |\, |T_0|=\infty )$ and the conditioned annealed law $\bar{\mathbb{P}}$ as in \eqref{annealedlaw}, with $\mathrm{P}$ replaced by $\bar{\mathrm{P}}$. Then $v(\nu,\mu_0)$ is the almost sure limit of $|X_n|/n$ under $\bar{\mathbb{P}}$, that is, the (traversable) tree is conditioned to be infinite.      
The following theorem gives the limit of the speed as $\varepsilon$ tends to zero.

\begin{thm} \label{thm:main}
It holds that 
\begin{align}
\lim_{\varepsilon \searrow 0} v(\nu,\mu_\varepsilon) = \hat{\mathrm{P}}_0(|T_0|=\infty) \cdot v(\nu,\mu_0) , 
\end{align}
where $\hat{\mathrm{P}}_0$ is the invariant measure for the tree seen from the random walk (see Section 2).  
\end{thm}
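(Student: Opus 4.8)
The plan is to carry out the entire argument at the level of the environment seen from the particle, using the stationary representation of the speed recalled in Section~\ref{sec:invariant}. For every $\varepsilon>0$ the conductances $\mu_\varepsilon$ are uniformly elliptic, so the walk is transient with positive speed and the environment process $(\bar\omega_n)_{n\ge 0}$ (the environment rooted at $X_n$) admits a unique ergodic invariant measure $\Pehat$, in terms of which the speed is the averaged local drift,
\begin{equation*}
v(\nu,\mu_\varepsilon)=\Eehat[d],\qquad d(\omega)=\sum_{u\sim\rho}\frac{\xi(\rho,u)}{\sum_{w\sim\rho}\xi(\rho,w)}\bigl(|u|-|\rho|\bigr).
\end{equation*}
This holds because each step changes $|X_n|$ by $\pm1$, so that $|X_n|-\sum_{k<n}d(\bar\omega_k)$ is a martingale with bounded increments and the ergodic theorem applies. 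The problem thus reduces to computing $\lim_{\varepsilon\searrow0}\Eehat[d]$, and I would attack this in three moves: first show that $\Pehat$ converges weakly to a limit $\hat{\mathrm{P}}_0$, then pass to the limit inside the expectation (the drift is bounded by $1$ and extends to a bounded continuous functional once the $O(\varepsilon)$ contribution of the vanishing $\varepsilon$-edges incident to $\rho$ is sent to $0$), and finally identify $\int d\,\mathrm{d}\hat{\mathrm{P}}_0$.

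The crucial and most delicate step is the convergence $\Pehat\to\hat{\mathrm{P}}_0$, and in particular the fact that the limit charges \emph{both} the event $\{|T_0|=\infty\}$, on which the origin lies in the infinite $\mu$-cluster, and $\{|T_0|<\infty\}$, on which it sits in a finite trap. This non-degeneracy reflects a balance of two competing scales: while travelling in an infinite cluster the walk enters an adjacent finite trap with probability of order $\varepsilon$ per step, but once inside it needs of order $1/\varepsilon$ steps to cross an $\varepsilon$-edge back out, so the stationary fraction of time spent in traps stays of order one. To promote this heuristic to an actual convergence I would establish tightness of $\{\Pehat\}_{0<\varepsilon\le1}$ — controlling trap entrance probabilities and exit times uniformly in $\varepsilon$ and ruling out escape of mass into arbitrarily large or deep traps via the finite second moment of $N$ and the uniform ellipticity of $\mu$ — and then pin down every subsequential limit by verifying stationarity against the limiting $\varepsilon=0$ dynamics.

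Granting the convergence, I would split $\int d\,\mathrm{d}\hat{\mathrm{P}}_0$ over $\{|T_0|<\infty\}$ and $\{|T_0|=\infty\}$. On $\{|T_0|<\infty\}$ the $\varepsilon=0$ dynamics confine the walk to a finite tree with all $\varepsilon$-edges closed; since excursions into such a trap last $\sim1/\varepsilon$ steps while the trap mixes in $O(1)$ steps, the conditional law of $\hat{\mathrm{P}}_0$ on the trap is the reversible occupation measure $\pi(x)\propto\sum_{w\sim x}\xi(x,w)$. Because $\pi(x)p(x,y)\propto\xi(x,y)$ is symmetric and $|u|-|\rho|$ is a gradient, the edge contributions cancel pairwise and $\int_{\{|T_0|<\infty\}}d\,\mathrm{d}\hat{\mathrm{P}}_0=0$. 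On $\{|T_0|=\infty\}$ the closed edges are invisible to the drift, the dynamics are exactly those of the $\mu_0$-walk on the cluster conditioned to be infinite, and the renormalised restriction $\hat{\mathrm{P}}_0(\,\cdot\mid|T_0|=\infty)$ is precisely the invariant measure of that conditioned environment process; applying the stationary speed formula to the pruned tree gives $\int_{\{|T_0|=\infty\}}d\,\mathrm{d}\hat{\mathrm{P}}_0=\hat{\mathrm{P}}_0(|T_0|=\infty)\,v(\nu,\mu_0)$. Adding the two pieces yields the claim.

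The main obstacle is precisely the convergence and identification of $\Pehat$: the limit is singular rather than continuous, since trapping times diverge like $1/\varepsilon$ and $\mu_\varepsilon$ fails to be uniformly elliptic in the limit, so Proposition~\ref{prop:cont} does not apply. The nondegenerate weight $\hat{\mathrm{P}}_0(|T_0|=\infty)$ must therefore be produced by a careful, $\varepsilon$-uniform control of excursions into the finite clusters, rather than by any soft continuity argument.
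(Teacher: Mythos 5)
Your high-level skeleton --- split according to whether the walker's cluster $T_0$ is infinite or finite, prove weak convergence of the invariant measures, and identify the limit on the infinite cluster --- is the same as the paper's, but three of your load-bearing steps have genuine gaps. The most basic one is your starting formula $v(\nu,\mu_\varepsilon)=\Eehat[d]$ with $d(\omega)=\sum_{u\sim\rho}\frac{\xi(\rho,u)}{\pi(\rho)}\,(|u|-|\rho|)$: under $\Pehat$ the environment is re-rooted at the walker's current position, so the original root, and hence the increment $|u|-|\rho|$, is \emph{not} a function of $\omega$; read literally, with $\rho$ the root of the re-rooted tree, every neighbour has $|u|-|\rho|=1$ and $d\equiv 1$. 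This is exactly where trees differ from $\mathbb{Z}^d$: there is no translation-invariant direction, and for this reason the paper must work with the bi-infinite walk under $\PPehat$ and measure displacement by the horodistance $D_\infty=[X_1-X_0]_{X_{-\infty}}$ relative to the boundary point determined by the walk's past. The correct one-step drift is a function of the environment \emph{and} the past trajectory, not of the environment alone, so your martingale/ergodic-theorem derivation does not get off the ground as written.

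Two further steps fail even after this is repaired. Your plan to pin down subsequential limits of $\Pehat$ ``by verifying stationarity against the limiting $\varepsilon=0$ dynamics'' cannot work, because at $\varepsilon=0$ the invariant measure is not unique: as noted in Remark \ref{rem:invariant0}, any measure giving probability one to $\{\pi(\rho)=0\}$ is invariant, so stationarity does not identify the limit. The paper instead proves Lemma \ref{lem:convinvariantmeasure} by an explicit finite-dimensional computation with the density $\pi(\rho)/(\gamma\,\mathrm{deg}(\rho))$. Finally, weak convergence plus ``the drift is bounded and continuous up to $O(\varepsilon)$'' is not enough to pass to the limit: the functional $D_\infty$ depends on the infinite past and is not continuous, which is why the paper needs a uniform-in-$\varepsilon$ transience estimate on the infinite-cluster event (via the effective-resistance moment bounds of Lemma \ref{lem:resistancemoments}) and, on the finite-cluster event, a trap-time/ergodic-theorem argument (Lemma \ref{lem:speedonfinitetrees}, using Lemma \ref{lem:secondmomenttotalprogeny}). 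Your reversibility cancellation on finite traps is a good heuristic for why that contribution vanishes, but you apply it to a limiting object ($d$ on a finite trap at $\varepsilon=0$, where no boundary point exists) that your framework never makes well defined; the actual estimate must be carried out at $\varepsilon>0$, uniformly in $\varepsilon$, which is precisely the content of the paper's proof.
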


Theorem \ref{thm:main} shows that the limit of the speed for conductances approaching zero is not the speed of the random walk as usually defined on trees with positive extinction probability. Instead it is smaller by the multiplicative constant $\hat{\mathrm{P}}_0(|T_0|=\infty)<1$. The reason for this is a slowdown experienced by the walk as it spends time in finite parts of the tree which can only be left by traversing an edge with conductance $\varepsilon$, acting as traps in the environment. Indeed, for small $\varepsilon$, the tree can be thought of as a forest formed by the edges with conductance larger than $\varepsilon$, connected by $\varepsilon$-edges. The walker can only enter or leave the finite trees formed by large conductances by moving along a $\varepsilon$-edge, which has a probability of order $\varepsilon$. Consequently, the time spend on finite trees connected by large conductances is of order $\varepsilon^{-1}$, resulting in a macroscopic amount of time spend in traps. For $\varepsilon=0$ and conditioned on $T_0$ being infinite, as in the definition of $v(\nu,\mu_0)$, such a slowdown cannot occur. It is interesting to note that when $T_0$ is conditioned to be infinite, the random walk moves on a tree with leaves, which create a different slowdown mechanism by forming traps. The even smaller limit of $v(\nu,\mu_\varepsilon)$ shows that the latter effect is in some sense weaker.

The reason the slowdown is not just given by the probability for an infinite tree under $\mathrm{P}$, is that in order to move with linear speed, we need to see an infinite tree from the perspective of the walk after a long time. We therefore have to look for an infinite tree under the measure $\hat{\mathrm{P}}_0$, which generates a tree stationary for the process of the environment viewed from the random walk. 

In the next section, we introduce the invariant measure for the environment seen from the walker and its limit as $\varepsilon\to 0$. The proofs of Proposition \ref{prop:cont} and Theorem \ref{thm:main} are given in Section \ref{sec:proofs}.

\section{The invariant measure}
\label{sec:invariant}

In order to define the invariant measure, we need to introduce augmented Galton-Watson trees and bi-infinite walks on them. We follow \cite{gantert2012random}, with modified notation. We write $\mathrm{P}_\varepsilon$ for $\mathrm{P}$, if the marginal law of the conductances is given by $\mu_\varepsilon$ as in \eqref{defmuepsilon}. Then, let $\Peaug$ be the law on $\Omega$, such that under $\Peaug$, $T$ is an augmented Galton-Watson tree with an additional edge $(\rho,v_0)$ added to the root, and attached to vertex $v_0$ at the other end of that edge an independent Galton-Watson tree. That is, 
\begin{align} \label{defaugmented}
\Peaug((T,\rho,\xi)\in A) = \sum_{k=1}^\infty p_{k-1}\mathrm{P}_\varepsilon((T,\rho,\xi)\in A\mid \mathrm{deg}(\rho) = k) .
\end{align} 
The measure $\Pehat$ is then defined via
\begin{align} \label{defPhat}
\Eehat [f(T,\rho, \xi)] = \mathrm{E}_\varepsilon^{\mathrm{aug}}\left[ f(T,\rho, \xi) \frac{\pi(\rho)}{\gamma \mathrm{deg}(\rho)}\right],
\end{align}
with $\pi(v) = \sum_{w\sim v} \xi(v,w)$ and $\gamma$ the normalization constant. Under $\Pehat$, the root is weighted by the average conductance of adjacent edges and $\gamma$ is the mean conductance of an edge. Given an environment $\omega\in \Omega$, let $\hat P_\omega$ be the law of the bi-infinite random walk $(X_n)_{n\in \mathbb{Z}}$ on $T$, such that $X_0=\rho$ and $(X_n)_{n\geq 0}$ (interpreted as the future of the random walk) and $(X_{-n})_{n\geq 0}$ (interpreted as the past of the random walk) are independent with marginal law as defined by \eqref{quenchedlaw}. We let $\PPehat$ be the corresponding annealed law, defined analogously to \eqref{annealedlaw}. As verified in \cite[Equation (2.2)]{gantert2012random}, we have for any bounded and measureable functions $f,g$
\begin{align} \label{symmetry}
\Eehat [f(T,\rho, \xi)g(T,v_0,\xi)\xi(\rho,v_0)] 
= \Eehat [f(T,v_0, \xi)g(T,\rho,\xi)\xi(v_0,\rho)] . 
\end{align}    
The environment seen from the random walk is the process $(T,X_n,\xi)_{n\in \mathbb{Z}}$ with state space $\Omega$ and transition operator
\begin{align}
Gf(T,\rho,\xi) = \frac{1}{\pi(\rho)}\sum_{v\sim \rho} \xi(\rho,v) f(T,v,\xi) . 
\end{align}
Using the symmetry relation \eqref{symmetry}, it is then proven in \cite[Lemma 3.1]{gantert2012random} that $G$ is reversible with respect to $\Pehat$, 
\begin{align} \label{reversible}
\Eehat [f(T,\rho, \xi)Gg(T,v_0,\xi)] 
= \Eehat [Gf(T,\rho, \xi)g(T,v_0,\xi)]  
\end{align}    
for $f,g\in L^2(\Pehat)$, in particular, $\Pehat$ is an invariant measure for the environment seen from the random walk. In the reference, the invariance is proven for uniformly elliptic conductances, but it works for $\varepsilon=0$ as well.   

In order to prove Theorem \ref{thm:main}, we need to consider $\Pehat$ as $\varepsilon\to 0$. Note that all the measures introduced in this section are well defined also if $\varepsilon=0$, i.~e., some edge have conductance 0. We then have the following continuity in $\varepsilon$, the proof is in Section \ref{sec:proofs}.

\begin{lem} \label{lem:convinvariantmeasure}
As $\varepsilon\to 0$, we have $\Pehat \to \hat{\mathrm{P}}_0$ and $\PPehat \to \hat{\mathbb{P}}_0$ weakly.
\end{lem}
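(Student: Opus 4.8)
The plan is to realize all the augmented environments $\Peaug$, $\varepsilon\geq 0$, on one probability space through the coupling described after \eqref{defmuepsilon}, and then to pass to the limit by dominated convergence. Concretely, let $\mathbb{Q}$ be the law of a triple $(T,(c_e),(\eta_e))$, where $T$ is an augmented Galton--Watson tree rooted at $\rho$, the marks $c_e$ are i.i.d.\ Bernoulli$(\alpha)$ and the $\eta_e$ are i.i.d.\ with law $\mu$, all independent. Setting $\xi^\varepsilon(e)=\varepsilon$ if $c_e=1$ and $\xi^\varepsilon(e)=\eta_e$ otherwise, the environment $\omega_\varepsilon:=(T,\rho,\xi^\varepsilon)$ has law $\Peaug$ for every $\varepsilon\geq 0$. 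Since $T$, the marks and the $\eta_e$ are frozen and only the conductances on marked edges move, $\xi^\varepsilon(e)\to\xi^0(e)$ for each edge $e$ as $\varepsilon\searrow 0$, so $\omega_\varepsilon\to\omega_0$ $\mathbb{Q}$-almost surely in the local topology. In particular $f(\omega_\varepsilon)\to f(\omega_0)$ for every bounded continuous local $f$, and moreover $\pi_\varepsilon(\rho)\to\pi_0(\rho)$ and $\gamma(\varepsilon)=\alpha\varepsilon+(1-\alpha)\int x\,\mu(\mathrm{d}x)\to\gamma(0)$, while $\mathrm{deg}(\rho)$ does not depend on $\varepsilon$, where $\pi_\varepsilon(x)=\sum_{w\sim x}\xi^\varepsilon(x,w)$.

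For $\Pehat\to\hat{\mathrm{P}}_0$, recall from \eqref{defPhat} that $\Eehat[f]=\mathbb{E}_\mathbb{Q}[f(\omega_\varepsilon)\,w_\varepsilon]$ with weight $w_\varepsilon=\pi_\varepsilon(\rho)/(\gamma(\varepsilon)\,\mathrm{deg}(\rho))$. By the first paragraph $f(\omega_\varepsilon)w_\varepsilon\to f(\omega_0)w_0$ $\mathbb{Q}$-a.s. The key estimate is a uniform bound on $w_\varepsilon$: for $\varepsilon<\delta$ every conductance is at most $\delta^{-1}$, so $\pi_\varepsilon(\rho)/\mathrm{deg}(\rho)\leq\delta^{-1}$, whereas $\gamma(\varepsilon)\geq\gamma(0)=(1-\alpha)\int x\,\mu(\mathrm{d}x)\geq(1-\alpha)\delta>0$; hence $w_\varepsilon\leq[(1-\alpha)\delta^2]^{-1}$ uniformly in small $\varepsilon$. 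Dominated convergence then yields $\Eehat[f]\to\hat{\mathrm{E}}_0[f]$ for all bounded continuous local $f$, which is the asserted weak convergence (and the same coupling, with weight $\equiv 1$, gives $\Peaug\to\mathrm{P}_0^{\mathrm{aug}}$).

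For the annealed statement $\PPehat\to\hat{\mathbb{P}}_0$ it suffices to test against bounded continuous cylinder functions $F(\omega,(x_n))$ depending on the environment through a finite ball and on the path through finitely many times $-m,\dots,m$, since these are convergence-determining for the local topology on $\Omega$ together with the product topology on path space. Writing $\hat{\mathbb{E}}_\varepsilon[F]=\mathbb{E}_\mathbb{Q}[\,w_\varepsilon\,\hat{E}_{\omega_\varepsilon}[F(\omega_\varepsilon,\cdot)]\,]$ and using the uniform bound on $w_\varepsilon$ together with $|\hat{E}_{\omega_\varepsilon}[F]|\leq\|F\|_\infty$, dominated convergence reduces everything to the $\mathbb{Q}$-a.s.\ convergence of the inner factor. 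On $\{\pi_0(\rho)=0\}$ one has $w_\varepsilon\to w_0=0$, so boundedness of $\hat{E}_{\omega_\varepsilon}[F]$ is enough; on $\{\pi_0(\rho)>0\}$ the walk is well defined in the limit and $\hat{E}_{\omega_\varepsilon}[F]$ is a finite sum over paths of length $2m$ weighted by products of transition probabilities $\xi^\varepsilon(x_i,x_{i+1})/\pi_\varepsilon(x_i)$. For a path using only unmarked ($\mu$-)edges, every visited vertex carries its incoming $\mu$-edge, so $\pi_\varepsilon(x_i)\geq\delta$ is bounded below and all factors, as well as the value of $F$, converge to their $\varepsilon=0$ counterparts.

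The main obstacle is to show that the paths traversing at least one marked edge contribute nothing in the limit, which is precisely the degeneracy produced by the vanishing conductances. I would bound the $\hat{P}_{\omega_\varepsilon}$-probability that the walk crosses a marked edge during $\{-m,\dots,m\}$ by summing over steps the probability of a first marked crossing. As long as the walk has used only $\mu$-edges it sits at a vertex $x$ with $\pi_\varepsilon(x)\geq\delta$, so the chance that its next step is along a marked edge is at most $\mathrm{deg}(x)\varepsilon/\delta$. In $2m$ steps the walk stays inside the ball $B_m(\rho)$, whose maximal degree $D_m$ is $\mathbb{Q}$-a.s.\ finite, so this probability is at most $2m\,D_m\,\varepsilon/\delta\to 0$. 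Hence the marked-edge paths contribute $O(\varepsilon)$, the surviving paths are exactly those of the $\varepsilon=0$ walk on the cluster of $\rho$, and $\hat{E}_{\omega_\varepsilon}[F(\omega_\varepsilon,\cdot)]\to\hat{E}_{\omega_0}[F(\omega_0,\cdot)]$ on $\{\pi_0(\rho)>0\}$. Combining this with the weight convergence of the previous paragraphs completes the proof of both convergences.
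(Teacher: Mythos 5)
Your proof is correct, and it takes a genuinely different route from the paper's. The paper never couples the environments: it deduces $\mathrm{P}^{\mathrm{aug}}_\varepsilon \to \mathrm{P}^{\mathrm{aug}}_0$ weakly from $\mu_\varepsilon\to\mu_0$, and then does the real work analytically, proving that for each fixed trajectory $(x_{-m},\dots,x_m)$ the map $\omega\mapsto \frac{\pi(\rho)}{\mathrm{deg}(\rho)}\hat P_\omega(X_{-m}=x_{-m},\dots,X_m=x_m)$ is bounded and continuous on the \emph{whole} environment space, including at degenerate environments: this is the case analysis on the first vertex along the path with vanishing $\pi$, where the product of transition probabilities is shown to tend to $0$, matching the limit value, together with the truncation $\{|T_{|B_m(\rho)}|\le M\}$ that makes the sum over trajectories finite. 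You instead realize all the laws $\Peaug$, $\varepsilon\ge 0$, on one probability space (the coupling the paper only mentions in the introduction), obtain almost sure local convergence $\omega_\varepsilon\to\omega_0$, and replace the continuity analysis by two probabilistic estimates: the weight $w_\varepsilon=\pi_\varepsilon(\rho)/(\gamma(\varepsilon)\mathrm{deg}(\rho))$ is uniformly bounded and vanishes in the limit on $\{\pi_0(\rho)=0\}$, and on the complement the chance that the walk crosses a marked edge within $2m$ steps is $O(\varepsilon)$ by your union bound (valid because a walk that has so far used only $\mu$-edges always sits at a vertex with $\pi_\varepsilon\ge\delta$). These estimates are the coupled, quantitative counterpart of the paper's continuity statement: the underlying cancellation is identical, but your version is more elementary and even yields a rate, whereas the paper's version is intrinsic — it uses only weak convergence of the conductance laws and would apply verbatim to any uniformly bounded family of laws converging weakly with nondegenerate limiting mean, not just the specific family $\mu_\varepsilon=\alpha\delta_\varepsilon+(1-\alpha)\mu$. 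The one step you outsource — that bounded continuous functions depending on finitely many path coordinates and on a finite ball of the environment are convergence-determining for the product of the path and local topologies — is precisely the step the paper proves by hand, via the Portmanteau argument of Billingsley's Theorem 2.2, writing open sets as countable unions of cylinder sets. Citing this as a standard fact is legitimate, but a self-contained write-up should include that argument (or the reference), since it is where the extension from the $\sigma$-algebras $\F_m\otimes\G$ to $\F\otimes\G$ actually happens.
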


\begin{remark} \label{rem:invariant0}
Let us comment on the limit measure $\hat{\mathrm{P}}_0$. If $\varepsilon=0$, the edges with positive conductance form a forest of disconnected trees. Since the random walk can only traverse edges with positive conductance, it moves on a subgraph of the original tree $T$. The measure $\hat{\mathrm{P}}_0$ is still invariant for the environment seen from the random walk. It is however not the unique invariant measure anymore. For example, any measure under which $\{\pi(\rho)=0\}$ has probability 1 is trivially invariant. Since the measure $\hat{\mathrm{P}}_0$ is the weak limit of $\Pehat$, it is the right measure to consider for the limit of the speed. Note that the event $\{\pi(\rho)=0\}$ has probability 0 under $\hat{\mathrm{P}}_0$, so there is always at least one edge with positive conductance adjacent to $\rho$. Indeed, for $\varepsilon$ small the vertices $v$ with $\pi(v)=0$ can only be reached by a $\varepsilon$-edge, which happens rarely, but they don't trap the random walk: the next step from $v$ is chosen uniformly among its neighbors. Therefore, the fraction of time spend in such vertices vanishes as $\varepsilon\to 0$.    
\end{remark}

For vertices $u,v,z\in T$, we define the signed distance of $u$ to $v$ with respect to $z$ as
\begin{align} \label{def:horodistance}
[u-v]_z:= d_T(u,z)-d_T(v,z),
\end{align}
where $d_T(u,z)$ is the graph distance between vertices $u$ and $z$. Note that this notion of distance is additive,
\begin{align}
[u-v]_z + [v-w]_z = [u-w]_z .
\end{align} 
The distance $[u-v]_z$ does not change when the reference vertex $z$ is replaced by a vertex $\tilde z$, provided that the path from $z$ to $\tilde z$ is disjoint from the paths from $z$ to $u$ and from $z$ to $v$. This implies that if $(x_m)_{m\geq 0}$ is a transient path on $T$, $[u-v]_{x_m}$ is constant for $m$ large enough and the following limit is well defined:
\begin{align} \label{def:horodistance2}
[u-v]_{x_{-\infty}} := \lim_{m\to \infty} [u-v]_{x_m}. 
\end{align}
The limit $[u-v]_{x_{-\infty}}$ is called the horodistance of $u$ to $v$, relative to the boundary point $x_{-\infty}$. As observed in \cite{LyoPemPer96a}, and applied to our setting by \cite{gantert2012random}, the ergodicity of the environment seen from the particle under $\PPehat$ implies that the speed is given as
\begin{align} \label{speedviainvmeasure}
\lim_{n\to \infty} \frac{|X_n|}{n} = \EEehat \big[ [X_1-X_0]_{X_{-\infty}}\big] ,
\end{align}
where the limit holds $\mathbb{P}_\varepsilon$-almost surely, or equivalently $\PPehat$-almost surely, for any $\varepsilon>0$. Note that the distance $[X_1-X_0]_{X_{-\infty}}$ is only well-defined for $\varepsilon>0$. If $\varepsilon=0$, the random walk might start on a tree which is finite when edges with conductance zero are removed. In this case, the random walk in negative time cannot be transient and does not define a boundary point of the tree. We show in the following section that the limit of the speed for $\varepsilon\to 0$ is given by the corresponding limit of \eqref{speedviainvmeasure}, on the event that the tree $T_0$ formed by edges with positive conductance is infinite, that is,
\begin{align}\label{speedlimitnew}
\lim_{\varepsilon \searrow 0} v(\nu,\mu_\varepsilon) = \hat{\mathbb{E}}_0 \big[ [X_1-X_0]_{X_{-\infty}}\mathbbm{1}_{\{|T_0|=\infty\}} \big] .
\end{align}   
On the event $\{|T_0|=\infty\}$ the limit $X_{-\infty}$ is well defined. This implies Theorem \ref{thm:main}, since 
\begin{align}
\hat{\mathbb{P}}_0(|T_0|=\infty) = \hat{\mathrm{P}}_0(|T_0|=\infty) 
\end{align}
and
\begin{align}
\hat{\mathbb{E}}_0 \big[ [X_1-X_0]_{X_{-\infty}}\mid |T_0|=\infty \big] = v(\nu,\mu_0) .
\end{align}

\section{Proofs of the main results}
\label{sec:proofs}

\subsection{Proof of Proposition \ref{prop:cont}}

It was proven by \cite{gantert2012random}, that the speed can be expressed as an expectation of ratios of effective conductances. Let $\mathcal C(T)$ be the effective conductance from the root to infinity, then
\begin{align}
v(\nu,\mu) = 1-\frac2\gamma \mathrm{E^{aug}}\left[\xi_0\frac{\mathcal C(T^*)}{\mathcal C(T)}\right],\label{speedgantert}
\end{align} 
where $\xi_0=\xi(\rho,v_0)$ denotes the conductance of the additional edge in the augmented Galton-Watson tree and $T^*$ denotes the subtree composed of the tree rooted at $v_0$ and the additional edge $(\rho,v_0)$.

Now, let $(\mu_n)_{n\in\mathbb N}$ and $\mu$ be uniformly elliptic, i.~e. satisfying \eqref{uniformelliptic} for a common $\delta>0$, such that $\mu_n\to\mu$ weakly. We denote the law of the augmented Galton-Watson tree by $\mathrm P_n^{\mathrm{aug}}$ (respectively $\mathrm{P^{aug}}$), if the marginal law of the conductances is given by $\mu_n$ (respectively $\mu$). Then, \eqref{speedgantert} implies
\begin{align}
\lim_{n\to\infty} v(\nu,\mu_n)
&=1-\frac2\gamma \lim_{n\to \infty}\mathrm{E}_n^{\mathrm{aug}}\left[\xi_0\frac{\mathcal C(T^*)}{\mathcal C(T)}\right] \notag
\\&=1-\frac2\gamma \lim_{n\to \infty}\mathrm{E}_n^{\mathrm{aug}}\left[\mathrm{E}_n^{\mathrm{aug}}\left[\xi_0\frac{\mathcal C(T^*)}{\mathcal C(T)}\middle|T\right]\right] \notag
\\&=1-\frac2\gamma \lim_{n\to \infty}\mathrm{E}^{\mathrm{aug}}\left[\mathrm{E}_n^{\mathrm{aug}}\left[\xi_0\frac{\mathcal C(T^*)}{\mathcal C(T)}\middle|T\right]\right], \label{limitofspeed}
\end{align}
where for the last equality, we used that the distribution of $T$ is independent of $n$. 
We first show that the sequence of the conditional expectations in \eqref{limitofspeed} convergences in distribution, 
\begin{align}
\mathrm{E}_n^{\mathrm{aug}}\left[\xi_0\frac{\mathcal C(T^*)}{\mathcal C(T)}\middle|T\right] 
\xrightarrow[n\to \infty]{d}
\mathrm{E}^{\mathrm{aug}}\left[\xi_0\frac{\mathcal C(T^*)}{\mathcal C(T)}\middle|T\right]. \label{weakconv_conditionalexpectation}
\end{align}
To see this, we change the probability space by defining a tree $T$ as a subset of the Ulam-Harris tree and labeling each edge of the Ulam-Harris tree (not only the edges in a particular tree) with conductances. This entails that the conductances are independent of the tree, but the considered expectations remain the same.
Since $\mu_n\to \mu$ weakly by assumption, the weak convergence $\mathrm{P}_n\to \mathrm{P}$ follows and then also $\mathrm{P}^\mathrm{aug}_n\to \mathrm{P}^\mathrm{aug}$ by dominated convergence. Hence, \eqref{weakconv_conditionalexpectation} follows from \cite[Corollary 4.1]{crimaldi2005convergence}, if the mapping $\xi \mapsto \xi_0\frac{\mathcal C(T^*)}{\mathcal C(T)}$ is bounded and continuous.
Boundedness follows from $\mathcal{C}(T^*)\le\mathcal C(T)$ and the uniform ellipticity of $\mu_n$. Dirichlet's Principle implies that the effective conductance can be expressed as the minimum of a linear function in $\xi$, thus it is concave. Furthermore, for almost all $T,T^*$, the simple random walks on $T,T^*$ are transient, which implies that $\xi \mapsto C(T)$ and $\xi \mapsto C(T^*)$ are locally bounded on the convex set $\{(\xi_e)_e:\xi_e>0\}$. Therefore (see \cite[Section 41]{RobVar73}), both effective conductances are continuous on $\{(\xi_e)_e:\xi_e>0\}$ for almost all trees, which implies \eqref{weakconv_conditionalexpectation}. 

Now, note that the conditional expectation in \eqref{limitofspeed} is bounded by $\delta^{-1}$ for all $n$, thus it is uniformly integrable. So, the sequence of conditional expectations converges in expectation and we obtain from \eqref{limitofspeed}
\begin{align*}
\lim_{n\to\infty} v(\nu,\mu_n)
=1-\frac2\gamma \mathrm{E}^{\mathrm{aug}}\left[\mathrm{E}^{\mathrm{aug}}\left[\xi_0\frac{\mathcal C(T^*)}{\mathcal C(T)}\middle|T\right]\right]
=1-\frac2\gamma \mathrm{E}^{\mathrm{aug}}\left[\xi_0\frac{\mathcal C(T^*)}{\mathcal C(T)}\right]
=v(\nu,\mu),
\end{align*}
which concludes the proof of Proposition~\ref{prop:cont}.

\subsection{Proof of Theorem \ref{thm:main}}

As mentioned in \eqref{speedviainvmeasure}, for uniformly elliptic environment measures, the speed is the expectation under the invariant measure $\PPehat$ of $D_\infty=[X_1-X_0]_{X_{-\infty}}$. 
The weak convergence of $\PPehat$ does not imply the convergence of the speed, because $D_{\infty}$ is not a continuous function of the trajectory of the bi-infinite random walk. Also, note that since $T_0$ might be finite, $D_\infty$ is not well-defined for $\varepsilon=0$. On the event $\{|T_0|=\infty\}$ however, $D_\infty$ is well-defined even if $\varepsilon=0$ and we therefore distinguish whether $T_0$ is a finite tree or not. For $\varepsilon>0$, the tree $T_0$ is defined as the subgraph formed by the edges with conductance larger than $\varepsilon$ containing the root. The proof of Theorem \ref{thm:main} follows from the two following lemmas. 

\begin{lem}\label{lem:speedoninfinitetrees}
\begin{align*}
\lim_{\varepsilon\to 0} \EEehat \left[ D_\infty \mathbbm{1}_{\{|T_0|=\infty\}} \right]
= \hat{\mathbb{E}}_0\left[ D_\infty \mathbbm{1}_{\{|T_0|=\infty\}} \right]
\end{align*}
\end{lem}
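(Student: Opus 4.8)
The plan is to exploit the weak convergence $\hat{\mathbb P}_\varepsilon\to\hat{\mathbb P}_0$ of Lemma~\ref{lem:convinvariantmeasure}, but since $D_\infty\mathbbm{1}_{\{|T_0|=\infty\}}$ is bounded yet discontinuous in two separate ways (both $D_\infty$, a tail functional of the backward walk, and $\mathbbm{1}_{\{|T_0|=\infty\}}$, a tail event of the environment), weak convergence cannot be applied directly. I would first remove the indicator's discontinuity by passing to the natural coupling described after \eqref{defmuepsilon}: realise all the measures $\hat{\mathrm P}_\varepsilon$, $0\le\varepsilon<\delta$, on one probability space carrying the augmented Galton--Watson tree, i.i.d.\ ``trap coins'' (success probability $\alpha$) and i.i.d.\ base conductances drawn from $\mu$, and set $\xi^{(\varepsilon)}(e)$ equal to $\varepsilon$ on trap edges and to the base value otherwise; passing from this common law to $\hat{\mathrm P}_\varepsilon$ is the reweighting by $\pi(\rho)/(\gamma_\varepsilon\operatorname{deg}\rho)$ from \eqref{defPhat}. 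Because $\mu$ is supported in $[\delta,\delta^{-1}]$, for every $\varepsilon<\delta$ the cluster $T_0$ of non-trap edges containing $\rho$ is the \emph{same} random subtree, so $\{|T_0|=\infty\}$ becomes one fixed event and the reweighting converges pointwise and boundedly to its value at $\varepsilon=0$.

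Next I would linearise $D_\infty$. Since $X_0=\rho$ and $X_1$ is a neighbour, $[X_1-X_0]_{X_{-\infty}}$ equals $-1$ if $X_1$ is the first vertex on the geodesic ray from $\rho$ to $X_{-\infty}$ and $+1$ otherwise; writing $y$ for that backward escape neighbour, $D_\infty=1-2\,\mathbbm{1}\{X_1=y\}$. Conditioning on the environment, the forward and backward walks are independent under $\hat P_\omega$, so with $h_\omega(w):=\hat P_\omega(y=w)$ the escape-direction probabilities one obtains
\[
\EEehat\big[D_\infty\mathbbm{1}_{\{|T_0|=\infty\}}\big]=\EEehat\Big[\mathbbm{1}_{\{|T_0|=\infty\}}\Big(1-2\sum_{w\sim\rho}\tfrac{\xi(\rho,w)}{\pi(\rho)}\,h_\omega(w)\Big)\Big],
\]
and, because on $\{|T_0|=\infty\}$ the backward walk is transient on $T_0$ and $y$ is well defined, the analogous identity holds for $\hat{\mathbb E}_0$. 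This replaces the tail functional $D_\infty$ by the explicit environment functionals $\xi(\rho,w)/\pi(\rho)$ and $h_\omega(w)$, the latter being potential-theoretic: $h_\omega(w)$ is a ratio of effective conductances from $\rho$ to infinity through the distinct branches at $\rho$ (cf.\ the effective-conductance formula \eqref{speedgantert}). I would then show that on $\{|T_0|=\infty\}$ each ingredient converges pointwise as $\varepsilon\searrow0$: $\xi(\rho,w)\to\xi^{(0)}(\rho,w)$ and $\pi(\rho)\to\pi^{(0)}(\rho)>0$ by construction, and $h_\omega(w)\to h_\omega^{(0)}(w)$ by continuity of effective conductances as the trap conductances vanish. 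Bounded convergence against the coupling measure, together with the bounded convergence of the reweighting, then yields the lemma.

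The crux, and the step I expect to be hardest, is the convergence $h_\omega(w)\to h_\omega^{(0)}(w)$ on $\{|T_0|=\infty\}$, that is, the continuity of the backward escape direction as the trap edges are switched off. The danger is exactly the trapping that drives the whole phenomenon: for $\varepsilon>0$ the backward walk may leave $T_0$ through a trap edge and spend time of order $\varepsilon^{-1}$ before returning, so approximating $D_\infty$ by truncating the walk after a \emph{fixed number of steps} fails uniformly in $\varepsilon$. The potential-theoretic route circumvents this because the escape probabilities are time-independent and expressed through effective conductances, which are monotone and continuous under vanishing edge weights. Concretely I would establish two estimates, both on $\{|T_0|=\infty\}$ and uniform for small $\varepsilon$: first, that escape through a trap neighbour of $\rho$ has probability $O(\varepsilon)$, since a branch entered through a conductance-$\varepsilon$ edge has effective conductance at most $\varepsilon$, whereas the infinite $T_0$-backbone contributes a branch of conductance bounded below in terms of $\delta$; and second, that for a $T_0$-neighbour $w$ the effective conductance from $w$ to infinity converges to its value in the pruned network $T_0$, which requires ruling out that the removed trap edges carry non-negligible current to infinity — handled by the uniform ellipticity inside $T_0$ and a monotone-convergence comparison of effective conductances on the a.s.\ transient tree.
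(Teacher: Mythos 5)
Your proposal is correct, but it takes a genuinely different route from the paper's proof. The paper approximates $D_\infty$ by the finite-window functionals $D_M=[X_1-X_0]_{X_{-M}}$ and $\mathbbm{1}_{\{|T_0|=\infty\}}$ by $\mathbbm{1}_{\{|T_0|>N\}}$, applies the weak convergence $\PPehat\to\hat{\mathbb{P}}_0$ of Lemma \ref{lem:convinvariantmeasure} to these continuous bounded approximants, and then must control the error $\EEehat[|D_\infty-D_M|\mathbbm{1}_{\{|T_0|=\infty\}}]$ \emph{uniformly in} $\varepsilon$; this is the hard step, handled by bounding the probability of a late return of the backward walk to the root via an effective-conductance ratio, Cauchy--Schwarz, and a uniform second moment of the effective resistance (stationarity, Rayleigh monotonicity, the Harris decomposition, and the moment bound of Lemma \ref{lem:resistancemoments}). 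You sidestep both the weak-convergence lemma and the uniform tail estimate: under the coupling, $\{|T_0|=\infty\}$ is a single $\varepsilon$-independent event and the Radon--Nikodym factor $\pi(\rho)/(\gamma_\varepsilon\deg\rho)$ converges boundedly, while the quenched identity $\hat E_\omega[D_\infty]=1-2\sum_{w\sim\rho}\frac{\xi(\rho,w)}{\pi(\rho)}h_\omega(w)$, with $h_\omega(w)$ the ratio of the branch conductance at $w$ to $\mathcal{C}_\omega(\rho,\infty)$ (last-exit decomposition on a tree), converts the time-asymptotic quantity $D_\infty$ into a purely potential-theoretic functional of the environment; its convergence as $\varepsilon\searrow 0$ follows from the Rayleigh-monotonicity sandwich $\mathcal{C}_{\omega^{(0)}}(\rho,\infty)\le\mathcal{C}_{\omega^{(\varepsilon)}}(\rho,\infty)\le\mathcal{C}_{\omega^{(\varepsilon)}}(\rho,G_n(T))$ together with continuity of finite-network conductances, and bounded convergence finishes. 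What your route buys: no need for Lemma \ref{lem:resistancemoments} or Lemma \ref{lem:convinvariantmeasure} in this proof, the uniform-in-$\varepsilon$ transience estimate becomes a soft monotonicity argument, and you even get quenched (pointwise in the coupled environment) convergence. What the paper's route buys: it does not lean on the exact harmonic-measure formula, which is special to trees, so its truncation-plus-uniform-estimate scheme is more robust, and the weak-convergence lemma it uses is in any case needed to identify $\hat{\mathrm{P}}_0$ as the relevant object in Theorem \ref{thm:main}. To make your argument airtight you should state and prove (i) the last-exit formula for $h_\omega(w)$, and (ii) almost sure transience of $T_0$ on $\{|T_0|=\infty\}$ at $\varepsilon=0$, which is what gives $\mathcal{C}_{\omega^{(0)}}(\rho,\infty)>0$ and makes $D_\infty$ and your identity well defined under $\hat{\mathbb{P}}_0$; also note that your parenthetical claim that the surviving branch has conductance ``bounded below in terms of $\delta$'' is inaccurate---this conductance is a positive random variable, not deterministically bounded below---but harmless, since almost sure positivity is all your bounded-convergence argument requires.
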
 

\begin{lem}\label{lem:speedonfinitetrees}
\begin{align*}
\lim_{\varepsilon\to 0} \EEehat \left[ D_\infty \mathbbm{1}_{\{|T_0|<\infty\}} \right] =0
\end{align*}
\end{lem}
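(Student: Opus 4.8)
The plan is to exploit the stationarity of $\PPehat$ under the one-step shift $\theta$ of the bi-infinite walk together with the fact that the horodistance increment telescopes. Since $|D_\infty|\le 1$, the naive bound by $\PPehat(|T_0|<\infty)$ is useless, because this probability converges to the positive constant $\hat{\mathrm P}_0(|T_0|<\infty)$. Instead I would use that, for $\varepsilon>0$, the reference point $X_{-\infty}$ is common to all shifts of the bi-infinite walk, so by the additivity of the horodistance $D_\infty(\theta_n\omega)=[X_{n+1}-X_n]_{X_{-\infty}}$ telescopes along the trajectory. By stationarity of $\PPehat$ under $\theta$, for every $N$,
\begin{align*}
N\,\EEehat\big[D_\infty\mathbbm{1}_{\{|T_0|<\infty\}}\big]
=\EEehat\Big[\sum_{n=0}^{N-1}[X_{n+1}-X_n]_{X_{-\infty}}\,\mathbbm{1}_{\{X_n\text{ in a finite }>\varepsilon\text{-cluster}\}}\Big].
\end{align*}
The heuristic is that sojourns in finite clusters produce no net horodistance progress except through the rare crossings of $\varepsilon$-edges, so the right-hand side is $O(\varepsilon N)$.

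To make this precise I would decompose the index set $\{0\le n<N:\ X_n\text{ in a finite cluster}\}$ into its maximal runs $I_1,I_2,\dots$ on each of which the walk stays inside one single finite $>\varepsilon$-cluster $\mathcal T_j$. Telescoping over $I_j=\{a_j,\dots,b_j\}$ gives the contribution $[X_{b_j+1}-X_{a_j}]_{X_{-\infty}}$ with $X_{a_j}\in\mathcal T_j$ and $X_{b_j+1}\notin\mathcal T_j$, so its modulus is at most $\mathrm{diam}(\mathcal T_j)+1$. Since $\mathcal T_j$ is a $>\varepsilon$-cluster, the walk can only leave it by traversing an $\varepsilon$-edge; hence each run terminates with a distinct $\varepsilon$-edge crossing, and each run (except the two boundary runs containing times $0$ and $N-1$) enters $\mathcal T_j$ through a distinct $\varepsilon$-edge crossing. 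Attaching each complete run to its entering crossing and using stationarity yields
\begin{align*}
\Big|\EEehat\big[D_\infty\mathbbm{1}_{\{|T_0|<\infty\}}\big]\Big|
\le \frac{C}{N}+\EEehat\Big[\big(\mathrm{diam}(\mathcal T(X_1))+1\big)\,\mathbbm{1}_{\{X_0\to X_1\text{ crosses an }\varepsilon\text{-edge into a finite cluster}\}}\Big],
\end{align*}
where $C/N$ absorbs the two boundary runs and $\mathcal T(X_1)$ denotes the cluster entered.

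Two quantitative inputs then finish the proof. First, the $\varepsilon$-crossing rate is $O(\varepsilon)$: using the explicit density of $\Pehat$ with respect to $\Peaug$ in \eqref{defPhat}, the quenched probability that the first step crosses an $\varepsilon$-edge equals $\varepsilon k_\varepsilon(\rho)/\pi(\rho)$, with $k_\varepsilon(\rho)$ the number of $\varepsilon$-edges at $\rho$, so that
\begin{align*}
\PPehat\big(X_0\to X_1\text{ crosses an }\varepsilon\text{-edge}\big)
=\mathrm{E}_\varepsilon^{\mathrm{aug}}\Big[\frac{\pi(\rho)}{\gamma\,\mathrm{deg}(\rho)}\cdot\frac{\varepsilon\,k_\varepsilon(\rho)}{\pi(\rho)}\Big]
\le\frac{\varepsilon}{\gamma},
\end{align*}
and $\gamma=\mathrm{E}[\xi]\ge(1-\alpha)\delta$ stays bounded away from $0$ uniformly in $\varepsilon$. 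Second, for $\varepsilon<\delta$ the $>\varepsilon$-subgraph consists exactly of the $\mu$-edges, so the law of $\mathcal T(X_1)$ does not depend on $\varepsilon$; by the branching property the cluster reached across a freshly crossed $\varepsilon$-edge is an independent Galton-Watson cluster, and on the event that it is finite its diameter has exponential tails, giving a constant $C'$ with $\EEehat[\mathrm{diam}(\mathcal T(X_1))+1\mid\text{finite}]\le C'$. Combining the two inputs bounds the displayed expectation by $\varepsilon C'/\gamma$, so that $|\EEehat[D_\infty\mathbbm{1}_{\{|T_0|<\infty\}}]|\le C/N+\varepsilon C'/\gamma$; letting $N\to\infty$ and then $\varepsilon\to0$ gives the claim.

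The main obstacle is the factoring in the last step: one must justify that $\mathrm{diam}(\mathcal T(X_1))$ is conditionally independent of the $\varepsilon$-crossing event and integrable against it, uniformly in $\varepsilon$. This rests on the branching property under $\Peaug$ — removing the crossed $\varepsilon$-edge detaches an independent Galton-Watson subtree rooted at $X_1$ — together with the $\varepsilon$-independence of the cluster law for $\varepsilon<\delta$. The remaining bookkeeping (that a maximal sojourn in the finite part of the tree may visit several clusters, handled by the single-cluster refinement $I_j$, and the negligible boundary runs) is routine once the telescoping identity and the $O(\varepsilon)$ crossing rate are in hand.
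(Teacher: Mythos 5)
Your proposal is correct where the paper's proof is correct, but it takes a genuinely different route, so let me compare. The paper works pathwise: it applies the ergodic theorem to write the expectation as an almost-sure time average, decomposes time into sojourns in finite clusters, bounds the horodistance gain of each sojourn by the cluster size, rewrites the per-sojourn sum as a per-step sum weighted by the inverse sojourn duration $1/D_k$, applies the ergodic theorem a second time, and then separates $\hat{\mathbb{E}}_\varepsilon\left[D_0^{-2}\right]$ (which vanishes as $\varepsilon\to0$ because leaving a cluster requires crossing an $\varepsilon$-edge) from $\hat{\mathbb{E}}_\varepsilon\left[|T_0|^2\mathbbm{1}_{\{|T_0|<\infty\}}\right]$ (bounded via duality and the second moment of the total progeny, Lemma~\ref{lem:secondmomenttotalprogeny}) by Cauchy--Schwarz. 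You instead stay at the level of expectations: shift-stationarity of $\PPehat$ turns the expectation into a time average inside the expectation, telescoping attributes each maximal single-cluster run to its entry $\varepsilon$-crossing, and the bound becomes (crossing rate)$\times$(expected diameter of the entered cluster), i.e.\ $O(\varepsilon)$. Your route avoids both the ergodic theorem (stationarity suffices, so no almost-sure limits are needed) and Cauchy--Schwarz, requires only a \emph{first} moment of the cluster diameter instead of a second moment of the cluster size, and yields a quantitative rate $O(\varepsilon)$ where the paper obtains convergence only through dominated convergence. The factorization step you flag as the main obstacle is indeed the one point requiring care, but your justification is sound: on the crossing event, the quenched step probability $\varepsilon/\pi(\rho)$ and the $\Pehat$-density $\pi(\rho)/(\gamma_\varepsilon\operatorname{deg}(\rho))$ depend only on the conductances at $\rho$, while the cluster of $X_1$ lies entirely in the subtree beyond the crossed $\varepsilon$-edge, which under $\Peaug$ is an independent Galton--Watson tree with i.i.d.\ conductances whose pruned-cluster law does not depend on $\varepsilon$ for $\varepsilon<\delta$. (Minor bookkeeping remark: only the run containing time $0$ lacks an entry crossing inside the window; the run containing time $N-1$ is attributable as usual, so the boundary correction is even smaller than you allow.)

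One caveat: your claim that the finite cluster's diameter has exponential tails fails when the pruned process is critical, $(1-\alpha)\mathrm{E}[N]=1$; in that case the expected diameter of the (almost surely finite) cluster is infinite, and your final bound degenerates. However, this is not a deficiency relative to the paper: the paper's own argument invokes duality together with Lemma~\ref{lem:secondmomenttotalprogeny}, which requires offspring mean strictly less than one, and the second moment of the total progeny of a critical tree is likewise infinite, so the paper's proof has exactly the same blind spot. Both arguments go through verbatim in the subcritical and supercritical pruned regimes.
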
 

Combining Lemma \ref{lem:speedoninfinitetrees} and Lemma \ref{lem:speedonfinitetrees}, we obtain
\begin{align*}
\lim_{\varepsilon\to 0} v(\nu,\mu_\varepsilon)= \lim_{n\to \infty} \EEehat[D_\infty] = \hat{\mathbb{E}}_0\left[ D_\infty \mathbbm{1}_{\{|T_0|=\infty\}} \right] = \hat{\mathbb{E}}_0\left[ D_\infty | |T_0|=\infty  \right] \hat{\mathrm{P}}_0 (|T_0|=\infty) 
\end{align*}
and as described in the introduction, $\hat{\mathbb{E}}_0\left[ D_\infty | |T_0|=\infty  \right]=v(\nu,\mu_0)$.

\subsubsection{Proof of Lemma \ref{lem:speedoninfinitetrees}}

The main ingredients of the proof are the continuity of $\PPehat$ as $\varepsilon\to 0$ and a transience estimate on infinite subtrees $T_0$, which is uniform in $\varepsilon$. 
 
As an approximation for $D_{\infty}$ which depends only on finitely many coordinates, we introduce
\begin{align*}
D_M:= [X_1-X_0]_{{X}_{-M}} = d_T(X_1,X_{-M})-d_T(X_0,X_{-M}) .
\end{align*}
Since $D_M$ depends only on finitely many coordinates, it is a continuous and uniformly bounded function of the trajectory. Moreover, we approximate $\mathbbm{1}_{\{|T_0|=\infty\}}$ by the indicator functions $\mathbbm{1}_{\{|T_0|>N\}}$, which are continuous functions of the environment. For the limit of the speed we write then
\begin{align} \label{limitDM1}
\lim_{\varepsilon\to 0} \EEehat \left[D_\infty \mathbbm{1}_{\{|T_0|=\infty\}} \right]
= \lim_{\varepsilon\to 0} &\big(\EEehat \left[D_M \mathbbm{1}_{\{|T_0|>N\}} \right]
+ \EEehat \left[D_M \left(\mathbbm{1}_{\{|T_0|=\infty\}}-\mathbbm{1}_{\{|T_0|>N\}} \right)\right] \notag
\\&\quad+\EEehat \left[(D_\infty-D_M)\mathbbm{1}_{\{|T_0|=\infty\}}\right]\big) .
% \\&= \hat{\mathbb{E}}_0 \left[D_M\mathbbm{1}_{\{|T_0|=\infty\}} \right] + \lim_{\varepsilon\to 0} \EEehat \left[D_\infty-D_M\right].
\end{align}
By definition \eqref{def:horodistance2}, we have $D_M\to D_{\infty}$ as $M\to \infty$ almost surely on any infinite tree. The weak convergence in Lemma \ref{lem:convinvariantmeasure} and dominated convergence yield then for the first term on the right hand side of \eqref{limitDM1}
\begin{align} \label{limitDM2}
\lim_{M\to \infty} \lim_{N\to\infty} \lim_{\varepsilon\to 0}  \EEehat \left[ D_M \mathbbm{1}_{\{|T_0|>N\}} \right]
=\lim_{M\to \infty} \lim_{N\to\infty} \hat{\mathbb{E}}_0\left[ D_M \mathbbm{1}_{\{|T_0|>N\}} \right]
= \hat{\mathbb{E}}_0\left[ D_\infty \mathbbm{1}_{\{|T_0|=\infty\}} \right] .
\end{align}
We can bound the absolute value of the second expectation in \eqref{limitDM1} by
\begin{align} \label{limitDM4}
\left| \EEehat \left[D_M \left(\mathbbm{1}_{\{|T_0|=\infty\}}-\mathbbm{1}_{\{|T_0|>N\}} \right)\right] \right|
&\le 2\Eehat \left[ \mathbbm{1}_{\{|T_0|>N\}} - \mathbbm{1}_{\{|T_0|=\infty\}} \right].
\end{align}
Recall the definition of $\Pehat$ in \eqref{defPhat}, we have
\begin{align*}
\Eehat \left[ \mathbbm{1}_{\{|T_0|>N\}} - \mathbbm{1}_{\{|T_0|=\infty\}} \right]
&= \mathrm{E}_\varepsilon^{\mathrm{aug}} \left[ \left(\mathbbm{1}_{\{|T_0|>N\}}-\mathbbm{1}_{\{|T_0|=\infty\}} \right)\frac{\pi(\rho)}{\gamma_\varepsilon \mathrm{deg}(\rho)}\right]
\\&=\mathrm{E}_\varepsilon^{\mathrm{aug}} \left[ \left(\mathbbm{1}_{\{|T_0|>N\}}-\mathbbm{1}_{\{|T_0|=\infty\}} \right)\frac{1}{\gamma_\varepsilon \mathrm{deg}(\rho)} \mathrm{E}_\varepsilon^{\mathrm{aug}} \left[\pi(\rho)\middle|T\right] \right]
\\&=\mathrm{E}_\varepsilon^{\mathrm{aug}} \left[ \mathbbm{1}_{\{|T_0|>N\}}-\mathbbm{1}_{\{|T_0|=\infty\}} \right]
\\&=\mathrm{E}_0^{\mathrm{aug}} \left[ \mathbbm{1}_{\{|T_0|>N\}}-\mathbbm{1}_{\{|T_0|=\infty\}} \right].
\end{align*}
To see the last equality, observe that the distribution of the indicator functions does not depend on $\varepsilon$, since $T_0$ is the subtree formed by the edges with conductance larger than $\varepsilon$. Plugging this in \eqref{limitDM4}, we obtain
\begin{align}
\lim_{N\to\infty} \lim_{\varepsilon\to 0} 
\left| \EEehat \left[D_M \left(\mathbbm{1}_{\{|T_0|=\infty\}}-\mathbbm{1}_{\{|T_0|>N\}} \right)\right] \right| 
\le \lim_{N\to\infty} \mathrm{E}_0^{\mathrm{aug}} \left[ \mathbbm{1}_{\{|T_0|>N\}}-\mathbbm{1}_{\{|T_0|=\infty\}} \right]
=0.\label{limitDM5}
\end{align}
In view of \eqref{limitDM1} and the limits \eqref{limitDM2} and \eqref{limitDM5}, the proof is completed once we show 
\begin{align} \label{limitDM3}
\lim_{M\to \infty } \limsup_{\varepsilon\to 0} \EEehat \left[|D_\infty-D_M|\mathbbm{1}_{\{|T_0|=\infty\}}\right] = 0 .
\end{align}
In fact, we may replace the event $\{|T_0|=\infty\}$ by the event $\{X_{-M}\in T_0, |T_0|=\infty\}$, since $|D_\infty-D_M|\leq 2$ and $\PPehat(X_{-M}\notin T_0)$ vanishes as $\varepsilon\to 0$ for $M$ fixed. 
 Since $D_M=D_\infty$ on the event that $X_{-k}\neq \rho $ for all $k\geq M$, we can bound
\begin{align}
\EEehat\left[|D_\infty-D_M|\mathds 1_{\{X_{-M}\in T_0, |T_0|=\infty\}}\right] & \leq 2 \PPehat(X_{-k}=\rho \text{ for some } k\geq M, X_{-M}\in T_0,|T_0|=\infty) .
\end{align}
We need to show that the right hand side vanishes as $M\to\infty$, uniformly in $\varepsilon$. That is, we need to bound the return time to the root after a large time $M$, uniformly in $\varepsilon$. 

We denote by $\mathcal{C}_\omega(v,A)$ the effective conductance between  vertex $v$ and a set $A$ of vertices in the environment $\omega$, see \cite{doyle1984random} or \cite{LyoPer16}. Let $\eta(A)$ denote the hitting time of the set $A$, then (see \cite[Exercise 2.34]{LyoPer16}) we have for a random walk starting at $v$
\begin{align*}
P_\omega ( \eta(A)<\eta(B) ) \leq \frac{\mathcal{C}_\omega(v,A)}{\mathcal{C}_\omega(v,B)} . 
\end{align*}
This yields in our situation
\begin{align}\label{electricbound}
\hat P_\omega (X_{-k}=\rho \text{ for some } k\geq M |X_{-M}=v) \leq \frac{\mathcal{C}_\omega(v,\rho)}{\mathcal{C}_\omega(v,\infty)}  , 
\end{align} 
which implies 
\begin{align}
\PPehat(X_{-k}=\rho \text{ for some } k\geq M,X_{-M}\in T_0, |T_0|=\infty)
\leq \EEehat \left[ \frac{\mathcal{C}_\omega(X_{-M},\rho)}{\mathcal{C}_\omega(X_{-M},\infty)} \mathbbm{1}_{\{X_{-M}\in T_0,|T_0|=\infty\}} \right] . 
\end{align}
The Cauchy-Schwarz inequality implies the bound
\begin{align} \label{electric+CS}
& \PPehat(X_{-k}=\rho \text{ for some } k\geq M, X_{-M}\in T_0,|T_0|=\infty)^2 \notag \\
& \qquad \leq \EEehat \left[ \mathcal{C}_\omega(X_{-M},\rho)^2\mathbbm{1}_{\{|T_0|=\infty\}} \right] \EEehat \left[\mathcal{C}_\omega(X_{-M},\infty)^{-2} \mathbbm{1}_{\{X_{-M}\in T_0,|T_0|=\infty\}} \right] . 
\end{align}
For the first expectation, we note that the effective conductance between the root and $X_{-M}$ depends only on the path between these two vertices, so that for any $N\geq 1$, $\hat E_\omega[\mathcal{C}_\omega(X_{-M},\rho)]\mathbbm{1}_{\{|T_0|>N\}}$ is a continuous function of $\omega$ and bounded by $(M\delta)^{-1}$. Lemma \ref{lem:convinvariantmeasure} implies then
\begin{align}
\limsup_{\varepsilon\to 0} \EEehat \left[ \mathcal{C}_\omega(X_{-M},\rho)^2\mathbbm{1}_{\{|T_0|=\infty\}} \right] & \leq 
\limsup_{\varepsilon\to 0} \EEehat \left[ \mathcal{C}_\omega(X_{-M},\rho)^2\mathbbm{1}_{\{|T_0|>N\}} \right] \notag \\
& = \hat{\mathbb{E}}_0\left[ \mathcal{C}_\omega(X_{-M},\rho)^2\mathbbm{1}_{\{|T_0|>N\}} \right] .
\end{align}  
Letting $N\to\infty$, we have by monotone convergence
\begin{align}
\limsup_{\varepsilon\to 0} \EEehat \left[ \mathcal{C}_\omega(X_{-M},\rho)^2\mathbbm{1}_{\{|T_0|=\infty\}} \right] \leq \hat{\mathbb{E}}_0\left[ \mathcal{C}_\omega(X_{-M},\rho)^2\mathbbm{1}_{\{|T_0|=\infty\}} \right] .
\end{align}
Since under $\hat{\mathbb{P}}_0$, conditioned on $|T_0|=\infty$, the random walk is transient and the conductance between $X_{-M}$ and $\rho$ is bounded by $(|X_{-M}|\delta)^{-1}$, the conductance $\mathcal{C}_\omega(X_{-M},\rho)$ converges almost surely to zero. Dominated convergence implies then
\begin{align} \label{ersterEwert}
\lim_{M\to \infty} \limsup_{\varepsilon\to 0} \EEehat \left[ \mathcal{C}_\omega(X_{-M},\rho)^2\mathbbm{1}_{\{|T_0|=\infty\}} \right] = 0 .
\end{align}  
We now show that the second expectation in \eqref{electric+CS} remains bounded. Let us denote by $\mathcal{R}(\rho,\infty)=\mathcal{C}_\omega(\rho,\infty)^{-1}$ the effective resistance. Since the process $(T,X_n,\xi)_{n\in \mathbb{Z}}$ is stationary under $\PPehat$, 
\begin{align}
\EEehat \left[\mathcal{R}_\omega(X_{-M},\infty)^{2} \mathbbm{1}_{\{X_{-M}\in T_0,|T_0|=\infty\}} \right] 
& = \EEehat \left[\mathcal{R}_\omega(\rho,\infty)^{2} \mathbbm{1}_{\{X_{M}\in T_0,|T_0|=\infty\}} \right] \notag \\
& \leq \Eehat \left[\mathcal{R}_\omega(\rho,\infty)^{2} \mathbbm{1}_{\{|T_0|=\infty\}} \right] .
\end{align}
By Rayleigh's monotonicity principle, the effective resistance $\mathcal{R}_\omega(\rho,\infty)$ increases when all edges with conductance $\varepsilon$ are removed, which implies
\begin{align}\label{resistancemoment}
\Eehat \left[\mathcal{R}_\omega(\rho,\infty)^{2} \mathbbm{1}_{\{|T_0|=\infty\}} \right] \leq \hat{\mathrm{E}}_0 \left[\mathcal{R}_\omega(\rho,\infty)^{2} \mathbbm{1}_{\{|T_0|=\infty\}} \right] \leq \hat{\mathrm{E}}_0 \left[\mathcal{R}_\omega(\rho,\infty)^{2} \mid |T_0|=\infty  \right] .
\end{align}
By the Harris decomposition (see \cite[Proposition 4.10]{Lyon92}), the tree $T_0$,  conditioned on $|T_0|=\infty$,  may be decomposed into a backbone tree without leaves and with finite trees attached to it. The effective resistance $\mathcal{R}_\omega(\rho,\infty)$ only depends on the backbone tree, which is again a supercritical Galton-Watson tree. We may then apply Lemma \ref{lem:resistancemoments} below, which shows that the second moment in \eqref{resistancemoment} is finite. 

Together with \eqref{ersterEwert}, this implies that the upper bound in \eqref{electric+CS} vanishes when first $\varepsilon$ tends to zero and then $M$ to infinity. This yields \eqref{limitDM3} and concludes the proof of the lemma. 
\hfill $\Box$

\medskip

The following lemma shows the moment bound needed for the proof of Lemma \ref{lem:speedonfinitetrees}. It shows that the effective resistance of Galton-Watson trees has finite moments of any order, which might be of independent interest. 

\begin{lem}\label{lem:resistancemoments}
	Let $T$ be a Galton-Watson tree without leaves, offspring mean $m>1$ and uniformly elliptic conductances. Then for all $p>0$
	\begin{align*}
	\mathrm{E}\left[\mathcal{R}_\omega(\rho, \infty)^p\right]<\infty .
	\end{align*} 
\end{lem}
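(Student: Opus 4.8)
The plan is to turn the series-parallel structure of resistance on a tree into a recursive tail bound for the law of $R:=\mathcal{R}_\omega(\rho,\infty)$ and then iterate it. Let $v_1,\dots,v_N$ be the children of the root, let $r_i\in[\delta,\delta^{-1}]$ be the resistance of the edge $(\rho,v_i)$, and let $R_i$ be the effective resistance from $v_i$ to infinity inside the subtree $T_i$. The branch through $v_i$ has resistance $r_i+R_i$ and the branches are in parallel, so
\begin{align*}
\mathcal{C}_\omega(\rho,\infty)=\sum_{i=1}^N\frac{1}{r_i+R_i}\ge\frac{1}{\delta^{-1}+\min_{1\le i\le N}R_i}.
\end{align*}
Taking reciprocals gives $R\le\delta^{-1}+\min_i R_i$, so the event $\{R>t\}$ forces \emph{every} subtree resistance to exceed $t-\delta^{-1}$ simultaneously.

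Since the subtrees $(T_i)$ are independent copies of $T$, independent of $N$, writing $\psi(t)=\mathrm{E}[\mathbbm{1}_{\{R>t\}}]=\mathrm{P}(R>t)$ and $g(s)=\mathrm{E}[s^N]=\sum_k p_k s^k$ for the offspring generating function, the previous display yields the key recursion
\begin{align*}
\psi(t)\le\mathrm{E}\big[\psi(t-\delta^{-1})^N\big]=g\big(\psi(t-\delta^{-1})\big).
\end{align*}
I would then use supercriticality with no leaves: $p_0=0$ and $m=g'(1)>1$ force $p_1=g'(0)<1$, and since $g(s)/s=\sum_{k\ge1}p_k s^{k-1}\to p_1$ as $s\downarrow0$, there exist $c<1$ and $s_0>0$ with $g(s)\le cs$ on $[0,s_0]$. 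This is exactly the contraction that will drive $\psi$ to $0$ geometrically.

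To start the iteration I need $\psi(t)\to0$, i.e.\ $R<\infty$ almost surely; this is transience of the network, which holds for a supercritical Galton--Watson tree without leaves and uniformly elliptic conductances (by Rayleigh comparison with simple random walk). Choosing $t_0$ with $\psi(t_0)\le s_0$ and iterating in steps of size $\delta^{-1}$ gives $\psi(t_0+n\delta^{-1})\le c^n\psi(t_0)$, and monotonicity of $\psi$ upgrades this to an exponential bound $\psi(t)\le Ce^{-\lambda t}$ with $\lambda=\delta\,|\ln c|>0$. The moment bound then follows immediately from $\mathrm{E}[R^p]=\int_0^\infty p\,t^{p-1}\psi(t)\,\mathrm{d}t$, which is finite for every $p>0$; in fact $R$ has finite exponential moments.

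The main obstacle is the first step: recognizing that a large total resistance forces all $N$ subtree resistances to be large at once, which is what converts the branching structure into the \emph{contractive} generating-function recursion. The contraction factor $c<1$ encodes precisely the supercriticality input $p_1<1$ (together with $p_0=0$, which makes $g(s)/s$ finite at $0$); everything after that is a routine iteration and integration. The one point requiring a little care is seeding the iteration, where the standard a.s.\ finiteness of $R$ enters—alternatively, one can avoid citing transience by passing to the limit $\psi(\infty)\le g(\psi(\infty))$ and noting that $0$ and $1$ are the only fixed points of $g$ in $[0,1]$, with $\psi(\infty)<1$ since $R$ is finite with positive probability.
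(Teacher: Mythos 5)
Your proof is correct, and it takes a genuinely different route from the paper's. The paper first uses Rayleigh monotonicity to lower all conductances to the constant $\delta$, writes the finite-level recursion coming from the series and parallel laws, and then bounds the harmonic mean by the arithmetic mean to obtain the stochastic domination $\mathcal R_n \preccurlyeq N^{-2}\sum_{i=1}^N\left(\delta^{-1}+\mathcal R_{n-1}^{(i)}\right)$; finiteness of $\mathrm{E}[\mathcal R_\infty^m]$ is then proved by induction on integer $m$, the iteration being geometric because $\mathrm{E}[N^{-m}]<1$, with the base case $\mathrm{E}[\mathcal R_\infty]<\infty$ imported from \cite[Lemma 9.1]{LyoPemPer96a}. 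You instead discard all but the best branch, $R\le \delta^{-1}+\min_i R_i$, which converts the branching structure into the tail recursion $\psi(t)\le g\left(\psi(t-\delta^{-1})\right)$, and the contraction $g(s)\le cs$ near $0$ (valid since $p_0=0$ and $m>1$ force $p_1<1$) gives an exponential tail. Each approach buys something: your conclusion is strictly stronger, namely finite exponential moments rather than all polynomial moments, and your external input (almost sure transience, needed to seed the iteration with some $t_0$ satisfying $\psi(t_0)\le s_0$) is weaker than the paper's input of a finite mean resistance; both arguments exploit the no-leaves plus supercriticality hypothesis in parallel ways, the paper through $\mathrm{E}[N^{-m}]<1$ and you through $p_1<1$. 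The only step in your sketch requiring care is indeed the seeding: the citation of transience is standard (branching number $m>1$, plus Rayleigh comparison to handle the elliptic conductances), and your fixed-point alternative still needs $\mathrm{P}(R<\infty)>0$, which is the same fact in disguise, so it does not fully remove the external input --- but it is a lighter one than the paper's.
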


\begin{proof}
	Let $G_n(T)=\{v\in T:|v|=n\}$ be the $n$-th generation of the tree $T$. We have
	\begin{align*}
	\mathcal R_\omega(\rho,\infty)=\lim_{n\to\infty} \mathcal R_\omega(\rho,G_n(T)).
	\end{align*}
	According to the assumption of the uniformly ellipticity of the conductances there exists some $\delta>0$ such that $\xi(e)\ge \delta$ almost surely. By Rayleigh's monotonicity principle, the effective resistance $\mathcal{R}_\omega(\rho,G_n(T))$ increases when the conductances of all edges are reduced to $\delta$. 
    Let $N$ denote the number of offspring of the root and $v_1,\dots,v_N$ be the vertices of the first generation.
	Moreover, call $T_{v}$ the subtree of $T$ rooted at $v$ composed of $v$ and all its descendants.
	Using the Parallel Law and the Series Law, we obtain the following recursion
	\begin{align*}
	&\mathcal R_\omega(\rho,G_n(T))	
	\leq \mathcal R_{(T,\rho,(\delta)_e)}(\rho,G_n(T)) \\&\quad=\left(\frac{1}{\delta^{-1} + \mathcal R_{(T_{v_1},\rho,(\delta)_e)}(v_1,G_{n-1}(T_{v_1}))}+\dots + \frac{1}{\delta^{-1} + \mathcal R_{(T_{v_N},\rho,(\delta)_e)}(v_N,G_{n-1}(T_{v_N}))}\right)^{-1}.
	\end{align*}
	For the effective resistance between $v$ and $G_n(T_{v})$ we write $\mathcal R_n=\mathcal R_{(T_v,v,(\delta)_e)}(v,G_n(T_v))$.
	Since the subtrees $T_{v_1},\dots T_{v_N}$ are independent Galton-Watson trees, we obtain the following stochastic domination
	\begin{align*}
	\mathcal R_n \preccurlyeq 
	\left(\frac{1}{\delta^{-1} + \mathcal R_{n-1}^{(1)}}+\dots + \frac{1}{\delta^{-1} + \mathcal R_{n-1}^{(N)}}\right)^{-1},
	\end{align*} 
	where $\mathcal R_{n-1}^{(1)}, \dots,\mathcal R_{n-1}^{(N)}$ denote independent copies of $\mathcal R_{n-1}$.
	Bounding the harmonic mean by the arithmetic mean gives rise to 
	\begin{align}
	\mathcal R_n \preccurlyeq 
	\frac1{N^2} \sum_{i=1}^N \left(\delta^{-1}+\mathcal R_{n-1}^{(i)}\right).\label{effresistance_stochasticdomination}
	\end{align}
	%	Taking the expectation leads to
	%	\begin{align}
	%	\mathrm E[\mathcal R_n]&\leq
	%	\mathrm E\left[(\delta N)^{-1} + N^{-2} \sum_{i=1}^N \mathcal R_{n-1}^{(i)}\right] \notag
	%	\\&= \mathrm E\left[(\delta N)^{-1}\right] + \mathrm E\left[N^{-2} \sum_{i=1}^N \mathrm E\left[\mathcal R_{n-1}^{(i)}\middle| N\right]\right] \notag
	%	\\&=\mathrm E\left[(\delta N)^{-1}\right] + \mathrm E\left[N^{-1}\right]\mathrm E\left[\mathcal R_{n-1}\right].\label{effresistance_expectation_bound}
	%	\end{align}
	%	A vanishing resistance between two vertices involves shorting them together, so that they behave as if they were a single vertex. Consistently to this interpretation, we define $\mathcal R_0=0$. Iterating \eqref{effresistance_expectation_bound}, we obtain
	%	\begin{align*}
	%	\mathrm E[\mathcal R_n]\leq 
	%	\mathrm E\left[(\delta N)^{-1}\right] \sum_{k=0}^{n-1} \mathrm E\left[N^{-1}\right]^k.
	%	\end{align*}
	%	Since $0<\mathrm E[N^{-1}]<1$ holds, we can bound the expectation of the effective resistance by
	%	\begin{align*}
	%	\mathrm E\left[\mathcal R_\omega(\rho,\infty) \right] 
	%	\leq \lim_{n\to\infty}\mathrm E[\mathcal R_n]
	%	\leq  E\left[(\delta N)^{-1}\right] \cdot \frac1{1-\mathrm E\left[N^{-1}\right]} <\infty.
	%	\end{align*} 
	
	Now, write $\mathcal R_\infty= \mathcal R_{(T_v,v,(\delta)_e)}(v,\infty) =\lim_{n\to\infty} \mathcal R_n$ for the effective resistance between $v$ and infinity.
    From \cite[Lemma 9.1]{LyoPemPer96a} follows, that the mean resistance $\mathrm E[\mathcal R_\infty]$ is bounded.
	In the following, we show by induction that all the higher moments exist as well. %Therefore, write $\mathcal R_\infty= \mathcal R_{(T_v,v,(\delta)_e)}(v,\infty) =\lim_{n\to\infty} \mathcal R_n$ for the effective resistance between $v$ and infinity.
	Therefore, suppose that for a given $m\in\mathbb N$ the moment $\mathrm E[\mathcal R_\infty^{m-1}]$ is finite. Using the stochastic domination in \eqref{effresistance_stochasticdomination}, we have
	\begin{align*}
	\mathrm E[\mathcal R_n^m]
	&\leq
	\mathrm E\left[\left((\delta N)^{-1} + N^{-2} \sum_{i=1}^N \mathcal R_{n-1}^{(i)}\right)^m\right]
	\\&= \mathrm E\left[\sum_{k=0}^m \binom{m}{k}(\delta N)^{k-m}N^{-k} \left(N^{-1} \sum_{i=1}^N \mathcal R_{n-1}^{(i)}\right)^k\right].
	\end{align*}
	Applying Jensen's inequality gives the following upper bound,
	\begin{align}
	\mathrm E[\mathcal R_n^m]
	&\leq \sum_{k=0}^m \binom{m}{k} \delta^{k-m} \mathrm E \left[ N^{-m-1}\sum_{i=1}^N \left(\mathcal R_{n-1}^{(i)}\right)^k\right] \notag
	\\&= \sum_{k=0}^m \binom{m}{k} \delta^{k-m} \mathrm E\left[N^{-m-1} \sum_{i=1}^N \mathrm E\left[\left(\mathcal R_{n-1}^{(i)}\right)^k \middle| N\right]\right] \notag
	\\& = \sum_{k=0}^{m-1} \binom{m}{k} \delta^{k-m}\mathrm E\left[N^{-m}\right]\mathrm E\left[\mathcal R_{n-1}^k\right]
	+ \mathrm E[N^{-m}]\mathrm E[\mathcal R_{n-1}^m] \notag
	\\&\leq \sum_{k=0}^{m-1} \binom{m}{k} \delta^{k-m}\mathrm E\left[N^{-m}\right]\mathrm E\left[\mathcal R_{\infty}^k\right]
	+ \mathrm E[N^{-m}]\mathrm E[\mathcal R_{n-1}^m], \label{effresistance_mthmoment_bound}
	\end{align}
	where we used the monotonicity of the effective resistance in the last inequality. More precisely, we have $\mathcal R_n\le\mathcal R_\infty$ for all $n$. The finiteness of $\mathrm E\left[\mathcal R_\infty^k\right]$ for $k=0,\dots, m-1$ follows from the induction hypothesis.\\
	A vanishing resistance between two vertices involves shorting them together, so that they behave as if they were a single vertex. Consistently to this interpretation, we define $\mathcal R_0=0$.
	Hence, we have a recursion of the from $x_n\leq a+bx_{n-1}$, $x_0=0$, for some $a, b>0$. Iterating this gives rise to $x_n\leq a\sum_{k=0}^{n-1} b^k$.
	In our setting, iterating \eqref{effresistance_mthmoment_bound} leads to
	\begin{align}
	\mathrm E[\mathcal R_n^m]&\leq
	\left( \sum_{k=0}^{m-1} \binom{m}{k} \delta^{k-m}\mathrm E\left[N^{-m}\right]\mathrm E\left[\mathcal R_{\infty}^k\right] \right)
	\sum_{k=0}^{n-1} \mathrm E\left[N^{-m}\right]^k. \label{effresistance_iterating}
	\end{align}
    Since the absolut value of $\mathrm E\left[N^{-m}\right]$ is less than one, \eqref{effresistance_iterating} converges and
	\begin{align*}
	\mathrm E[\mathcal R_\omega(\rho,\infty)^m]
	&\leq \mathrm E[\mathcal R_\infty^m] = \lim_{n\to\infty}\mathrm E[\mathcal R_n^m]
	\\&\leq	\left( \sum_{k=0}^{m-1} \binom{m}{k} \delta^{k-m}\mathrm E\left[N^{-m}\right]\mathrm E\left[\mathcal R_\infty^k\right] \right)
	\cdot \frac{1}{1-\mathrm E[N^{-m}]}<\infty,
	\end{align*}
	which completes the proof.
\end{proof}

\subsubsection{Proof of Lemma \ref{lem:speedonfinitetrees}}

The main ingredients of this proof are the stationarity and the ergodicity of the process $(T, X_n,\xi)_{n\in\mathbb Z}$ under $\mathbb{\hat P}_\varepsilon$. By the ergodic theorem we have
\begin{align}
\hat{\mathbb{E}}_\varepsilon\left[[X_1-X_0]_{X_{-\infty}} \mathbbm{1}_{\{|T_0|<\infty\}}\right]
=\lim_{n\to\infty} \frac1n \sum_{k=0}^{n-1} [X_{k+1}-X_k]_{X_{-\infty}}  \mathbbm{1}_{\{|T_0(X_k)|<\infty\}}\label{ergodictheorem}
\end{align} 
almost surely, where $T_0(v)$ is the subtree formed by edges with conductance larger than $\varepsilon$ and containing $v$. As the sum can only increase when the random walk moves on finite trees, let us define the points in time, when the random walk enters and leaves finite trees,
\begin{align*}
b_1 &=\inf\{n\ge 1:X_n\notin|T_0(X_0)|\},\\
a_{k+1} &=\inf\{n\ge b_k:|T_0(X_n)|<\infty\},\quad k\in\mathbb N,\\
b_{k+1} &=\inf\{n\ge a_k:X_n\notin|T_0(X_{a_k})|\}, \quad k\in\mathbb N.
\end{align*}   
Since the limit in \eqref{ergodictheorem} exists, it is equal to the limit along the subsequence, which consists of the times when the random walk is located in a finite tree,
\begin{align}
\hat{\mathbb{E}}_\varepsilon\left[[X_1-X_0]_{X_{-\infty}} \mathbbm{1}_{\{|T_0|<\infty\}}\right]
&=\lim_{n\to\infty} \frac1{b_n}\sum_{k=0}^{b_1-1} [X_{k+1}-X_k]_{X_{-\infty}}\notag
\\&\quad+\lim_{n\to\infty} \frac1{b_n}\sum_{k=2}^{n}\sum_{i=a_k}^{b_k-1} [X_{i+1}-X_i]_{X_{-\infty}}.\label{alongsubsequence}
\end{align}
We note that the first limit in \eqref{alongsubsequence} vanishes almost surely. Now, let $D_k$ denote the time that the random walk spends in the tree $T_0(X_k)$, that is
\begin{align*}
D_k &= \sup\{n\in\mathbb N:X_{k+1},\dots,X_{k+n}\in T_0(X_k)\}
\\&\quad+ \sup\{n\in\mathbb N:X_{k-1},\dots,X_{k-n}\in T_0(X_k)\}
+1.
\end{align*}
Bounding the distance from the root that the random walks can gain on the $k$-th finite tree by its number of vertices, we obtain the following upper bound for the second limit in \eqref{alongsubsequence}
\begin{align}
\lim_{n\to\infty} \frac1{b_n}\sum_{k=2}^{n}\sum_{i=a_k}^{b_k-1} [X_{i+1}-X_i]_{X_{-\infty}} 
&\le \lim_{n\to\infty} \frac1{b_n} \sum_{k=2}^n |T_0(X_{a_k})|\notag
\\&= \lim_{n\to\infty} \frac1{b_n} \sum_{k=2}^n \sum_{i=a_k}^{b_k-1} \frac1{D_i}|T_0(X_i)|\notag
\\&= \lim_{n\to\infty} \frac1{b_n} \sum_{k=a_2}^{b_n-1}  \frac1{D_k}|T_0(X_k)| \mathbbm{1}_{\{|T_0(X_k)|<\infty\}}\notag
\\&\le \lim_{n\to\infty} \frac1{b_n} \sum_{k=0}^{b_n-1}  \frac1{D_k}|T_0(X_k)| \mathbbm{1}_{\{|T_0(X_k)|<\infty\}}. \label{ergodictheorem2time}
\end{align}
By the ergodic theorem the averages in \eqref{ergodictheorem2time} converge almost surely to their mean
\begin{align*}
\lim_{n\to\infty} \frac1{b_n} \sum_{k=0}^{b_n-1}  \frac1{D_i}|T_0(X_i)| \mathbbm{1}_{\{|T_0(X_k)|<\infty\}}
= \hat{\mathbb E}_\varepsilon \left[\frac1{D_0} |T_0(X_0)| \mathbbm{1}_{\{|T_0(X_0)|<\infty\}}\right].
\end{align*}
Applying the Cauchy-Schwarz inequality gives rise to the following bound
\begin{align}
\hat{\mathbb E}_\varepsilon \left[\frac1{D_0} |T_0(X_0)| \mathbbm{1}_{\{|T_0(X_0)|<\infty\}}\right]^2
\le \hat{\mathbb E}_\varepsilon \left[\frac1{D_0^2}\right] \hat{\mathbb E}_\varepsilon \left[|T_0(X_0)|^2 \mathbbm{1}_{\{|T_0(X_0)|<\infty\}}\right]. \label{csubound}
\end{align} 
Since $D_0\ge b_1$, we can bound the first expectation in \eqref{csubound} by
\begin{align*}
\hat{\mathbb E}_\varepsilon \left[\frac1{D_0^2}\right]
\le \hat{\mathbb E}_\varepsilon \left[\frac1{b_1^2}\right]
= \sum_{k=0}^\infty \frac1{k^2}\hat{\mathbb P}_\varepsilon(b_1=k).
\end{align*}
Dominated convergence implies
\begin{align}
\lim_{\varepsilon\to0} \hat{\mathbb E}_\varepsilon \left[\frac1{D_0^2}\right] 
\le \sum_{k=0}^\infty \frac1{k^2} \lim_{\varepsilon\to 0}\hat{\mathbb P}_\varepsilon(b_1=k). \label{secondinversmomentD_0}
\end{align}
By definition of $b_1$, we have
\begin{align}
\hat{\mathbb P}_\varepsilon(b_1=k)
&= \hat{\mathbb P}_\varepsilon(X_0,\dots,X_{k-1}\in T_0,X_k\notin T_0)\notag
\\&\le \hat{\mathbb P}_\varepsilon(X_0,\dots,X_{k-1}\in T_0,X_k\notin T_0, |T_0|>0)
+ \hat{\mathbb P}_\varepsilon(|T_0|=0).\label{probabilityrwleavesT0}
\end{align}
We distinguish, if the tree $T_0$ consists only of the root, because in that case the random walk can only leave $T_0$ in the first step. But this case occurs with vanishing probability as $\varepsilon\to0$, due to
\begin{align}
\hat{\mathbb P}_\varepsilon(|T_0|=0)
= \frac2\gamma \int \frac{\pi(\rho)}{\operatorname{deg}(\rho)} \mathbbm{1}_{\{|T_0|=0\}}\,\mathrm d\mathrm {P_\varepsilon^{aug}}
=\frac2\gamma \int \frac{\varepsilon \operatorname{deg}(\rho)}{\operatorname{deg}(\rho)} \,\mathrm d\mathrm {P_\varepsilon^{aug}}
=\frac{2\varepsilon}{\gamma}. \label{boundT0=0}
\end{align}
Now, we bound the first probability in \eqref{probabilityrwleavesT0} as follows
\begin{align}
&\hat{\mathbb P}_\varepsilon(X_0,\dots,X_{k-1}\in T_0,X_k\notin T_0, |T_0|>0)\notag
\\&\quad= \sum_{i=1}^\infty \hat{\mathbb P}_\varepsilon(X_0,\dots,X_{k-1}\in T_0,X_k\notin T_0, |T_0|>0|\operatorname{deg}(X_{k-1})=i) \hat{\mathbb P}_\varepsilon(\operatorname{deg}(X_{k-1})=i) \notag
\\&\quad\le \sum_{i=1}^\infty p_{i-1} \hat{\mathbb P}_\varepsilon(X_k\notin T_0|X_0,\dots,X_{k-1}\in T_0, |T_0|>0) \notag
\\&\quad\le \sum_{i=1}^\infty p_{i-1}  \frac{(i-1)\varepsilon}{(i-1)\varepsilon+\delta}.\label{boundrwleavesT0}
\end{align}
To see the last inequality, observe that there is at least one edge with conductance greater than $\delta$ adjacent to the vertex, where the random walk is located after $k-1$ steps. Decreasing the conductance of the other adjoining edges to $\varepsilon$ implies the bound we used above.
Plugging \eqref{boundT0=0} and \eqref{boundrwleavesT0} in \eqref{probabilityrwleavesT0}, dominated convergence implies
\begin{align*}
\lim_{\varepsilon\to0}\hat{\mathbb P}_\varepsilon(b_1=k)
\le \sum_{i=1}^\infty p_{i-1}  \lim_{\varepsilon\to0} \frac{(i-1)\varepsilon}{(i-1)\varepsilon+\delta}
+ \lim_{\varepsilon\to0}\frac{2\varepsilon}{\gamma} =0.
\end{align*}
Hence, due to \eqref{secondinversmomentD_0}, the second inverse moment of $D_0$ vanishes as $\varepsilon\to0$.

To complete the proof, we show that the second expectation in \eqref{csubound} remains bounded.
By the duality principle for branching processes (see \cite[Section 12]{AthNey72}), the tree $T_0$, conditioned on extinction, is a Galton-Watson tree. Since the second moment of the offspring law exists, Lemma~\ref{lem:secondmomenttotalprogeny} below shows that the second moment in \eqref{csubound} is finite, which concludes the proof of Lemma~\ref{lem:speedonfinitetrees}.
\hfill $\Box$

\medskip

The following lemma shows the finiteness of the second moment needed for the proof of Lemma~\ref{lem:speedonfinitetrees}. It shows a formula to compute the second moment of the total progeny of a branching process.

\begin{lem}\label{lem:secondmomenttotalprogeny}
	For a branching process with independent and identically distributed offspring $X$ having offspring mean $E[X]=\mu <1$ and finite second moment $E[X^2]=m_2<\infty$, the second moment of the total progeny is finite, precisely
	\begin{align*}
	\mathrm E\left[T^2\right] = \frac{m_2-\mu^2-\mu+1}{(1-\mu)^3}.
	\end{align*}
\end{lem}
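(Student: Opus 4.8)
The plan is to compute $\mathrm{E}[T^2]$ for the total progeny $T$ of a subcritical branching process by exploiting the classical recursive self-similarity of the tree. Writing $X$ for the number of children of the root and $T^{(1)},\dots,T^{(X)}$ for the independent total progenies of the subtrees rooted at these children, one has the distributional identity
\begin{align*}
T \stackrel{d}{=} 1 + \sum_{i=1}^X T^{(i)},
\end{align*}
where the $T^{(i)}$ are i.i.d.\ copies of $T$, independent of $X$. I would first record the well-known mean $\mathrm{E}[T]=1/(1-\mu)$, which follows by taking expectations in this identity and using Wald: $\mathrm{E}[T]=1+\mu\,\mathrm{E}[T]$. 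The finiteness of $\mathrm{E}[T]$ is guaranteed by $\mu<1$, and finiteness of the second moment will emerge from the computation below once we see that the relevant coefficient is again $\mu<1$.

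Next I would square the recursive identity and take expectations. Expanding,
\begin{align*}
T^2 = 1 + 2\sum_{i=1}^X T^{(i)} + \Big(\sum_{i=1}^X T^{(i)}\Big)^2,
\end{align*}
and conditioning on $X$ one evaluates the last term via the standard second-moment formula for a random i.i.d.\ sum: given $X=k$, the sum $\sum_{i=1}^k T^{(i)}$ has mean $k\,\mathrm{E}[T]$ and variance $k\,\var(T)$, so its second moment is $k\,\var(T)+k^2\,\mathrm{E}[T]^2$. Taking $\mathrm{E}[\,\cdot\mid X]$ and then averaging over $X$ gives
\begin{align*}
\mathrm{E}[T^2] = 1 + 2\mu\,\mathrm{E}[T] + \mathrm{E}[X]\,\var(T) + \mathrm{E}[X^2]\,\mathrm{E}[T]^2,
\end{align*}
that is $\mathrm{E}[T^2] = 1 + 2\mu\,\mathrm{E}[T] + \mu\,\mathrm{E}[T^2] - \mu\,\mathrm{E}[T]^2 + m_2\,\mathrm{E}[T]^2$, using $\var(T)=\mathrm{E}[T^2]-\mathrm{E}[T]^2$. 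This is now a linear equation in the single unknown $\mathrm{E}[T^2]$, with coefficient $\mu<1$ on the right, so it can be solved unambiguously.

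Finally I would solve for $\mathrm{E}[T^2]$, substituting $\mathrm{E}[T]=1/(1-\mu)$ and collecting terms over the common denominator $(1-\mu)^3$. Rearranging yields $(1-\mu)\mathrm{E}[T^2] = 1 + 2\mu\,\mathrm{E}[T] + (m_2-\mu)\mathrm{E}[T]^2$, and inserting the value of $\mathrm{E}[T]$ and simplifying the numerator should collapse to $m_2-\mu^2-\mu+1$, giving the claimed formula. The only genuine subtlety—and the step I would treat most carefully—is justifying a priori that $\mathrm{E}[T^2]<\infty$ before manipulating it as a finite quantity in the linear equation; this is needed so that the rearrangement dividing by $(1-\mu)$ is legitimate rather than a formal identity between possibly infinite quantities. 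I would secure this either by a truncation/generation-by-generation argument (bounding the second moment of the progeny up to generation $n$ and passing to the limit monotonically, exactly in the spirit of the iteration $x_n\le a+b\,x_{n-1}$ with $b=\mu<1$ used in Lemma~\ref{lem:resistancemoments}), or by invoking the standard fact that the subcritical total progeny has exponential tails under $m_2<\infty$. The algebraic simplification itself is routine and I would not belabor it.
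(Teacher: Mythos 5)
Your computation is correct and the algebra does collapse to the stated formula, but your route is genuinely different from the paper's. The paper works generation by generation: it writes $T=\sum_n Z_n$, derives the recursion $\mathrm{E}[Z_n^2]=m_2\mu^{n-1}-\mu^{n+1}+\mu^2\mathrm{E}[Z_{n-1}^2]$ and the mixed moments $\mathrm{E}[Z_mZ_n]=\mu^{n-m}\mathrm{E}[Z_m^2]$, and then sums the double series. Because all terms are nonnegative, monotone convergence makes the identity $\mathrm{E}[T^2]=\sum_n\mathrm{E}[Z_n^2]+2\sum_{m<n}\mathrm{E}[Z_mZ_n]$ valid in $[0,\infty]$ from the start, so finiteness falls out of the computation with no extra argument. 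Your root decomposition $T\stackrel{d}{=}1+\sum_{i=1}^X T^{(i)}$ combined with the second-moment formula for i.i.d.\ random sums is shorter and reduces everything to one linear equation, but, as you correctly identify, it presupposes $\mathrm{E}[T^2]<\infty$: the equation $\mathrm{E}[T^2]=\cdots+\mu\,\mathrm{E}[T^2]$ carries no information if both sides are infinite.

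Of your two proposed fixes for this, the first is sound and the second is false. The truncation argument works: $T_n=Z_0+\cdots+Z_n$ satisfies $T_n\stackrel{d}{=}1+\sum_{i=1}^X T_{n-1}^{(i)}$, the same conditioning gives $\mathrm{E}[T_n^2]\le C+\mu\,\mathrm{E}[T_{n-1}^2]$ with $C$ uniform in $n$ (using $\mathrm{E}[T_{n-1}]\le 1/(1-\mu)$ and $\var(T_{n-1})\le\mathrm{E}[T_{n-1}^2]$), iteration with $\mu<1$ gives a uniform bound, and $T_n\uparrow T$ with monotone convergence finishes it. However, it is \emph{not} a standard fact that the subcritical total progeny has exponential tails under $m_2<\infty$ alone: since $T\ge 1+X$, the tail of $T$ is at least as heavy as that of $X$, so exponential tails of $T$ require exponential moments of the offspring law. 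For instance, $\mathrm{P}(X=k)\sim ck^{-4}$ has $m_2<\infty$ but gives $T$ only polynomial tails. So promote the truncation argument from a fallback to the actual proof, and drop the exponential-tails claim; with that change your proof is complete.
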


\begin{proof}
	Let $Z_n$ denote the size of the $n$-th generation, that is
	\begin{align*}
	Z_0=1,\quad Z_n =\sum_{i=1}^{Z_{n-1}} X_{n,i},
	\end{align*}
	where $(X_{n,i})_{i,n\in\mathbb N}$ is a doubly infinite array of independent and identically distributed random variables with $X_{n,i}\sim X$ for all $n,i\in\mathbb N$. Then, the total progeny of the branching process is given by $T=Z_0+Z_1+\dots$.
%	\begin{align*}
%	T = \sum_{n=0}^\infty Z_n.
%	\end{align*}
	Monotone convergence implies the following respresentation of second moment of the total progeny
	\begin{align}\label{2momenttotalprogeny}
	\mathrm E[T^2] = \sum_{n=0}^\infty \mathrm E\left[ Z_n^2\right] +2\sum_{m=0}^\infty \sum_{n=m+1}^\infty \mathrm E\left[Z_m Z_n\right].
	\end{align}
	So, we need to compute the second moment of the generation sizes and the mixed moments. A straightforward calculation shows that
%	Since $(X_{n,i})_{i\in\mathbb N}$ is independent of $Z_{n-1}$, we obtain the following recursion
%	\begin{align}
%	\mathrm E[Z_n^2] 
%	&= \mathrm E\left[\sum_{i=1}^{Z_{n-1}} X_{n,i}^2\right] + \mathrm E\left[\sum_{\substack{i,j=1,\,i\ne j}}^{Z_{n-1}} X_{n,i} X_{n,j}\right] \notag
%	\\&= \mathrm E\left[\sum_{i=1}^{Z_{n-1}} \mathrm E[X_{n,i}^2|Z_{n-1}]\right] + \mathrm E\left[\sum_{\substack{i,j=1,\,i\ne j}}^{Z_{n-1}} \mathrm E[X_{n,i} X_{n,j}|Z_{n-1}]\right] \notag
%	\\&= \mathrm E[Z_{n-1}]\mathrm E[X^2]+\mathrm E[Z_{n-1}(Z_{n-1}-1)]\mathrm E[X]^2 \notag
%	\\&= m_2\mu^{n-1}-\mu^{n+1}+\mu^2\mathrm E[Z_{n-1}^2], \label{recursion2moment}
%	\end{align}
%	where we used $\mathrm E[Z_n]=\mu^n$ for the last equality. 
\begin{align}\label{recursion2moment}
\mathrm E[Z_n^2]  = m_2\mu^{n-1}-\mu^{n+1}+\mu^2\mathrm E[Z_{n-1}^2] .
\end{align}	
	Iterating \eqref{recursion2moment} leads to (see also \cite[Section 2]{AthNey72})
	\begin{align}
	\mathrm E[Z_n^2] =\frac{m_2\mu^{n-1}(1-\mu^n)-\mu^{n+1}+\mu^{2n}}{1-\mu}.\label{2momentgenerationsize}
	\end{align}
%	One can easily verify this formula by induction with the aid of the recursion relation in \eqref{recursion2moment}. 
	Next, we derive a recurrence equation for the mixed moments. Using the independence of $(X_{n,i})_{i\in\mathbb N}$ and $Z_{n-1}$, we obtain for $n> m$
	\begin{align*}
	\mathrm E[Z_m Z_n]
	&=\mathrm E\left[Z_m \sum_{i=1}^{Z_{n-1}} X_{n,i}\right]
	=\mathrm E\left[Z_m \sum_{i=1}^{Z_{n-1}} \mathrm E[  X_{n,i}|Z_{1},\dots,Z_{n-1}]\right]
	\\&=\mathrm E\left[Z_m \sum_{i=1}^{Z_{n-1}}\mathrm E[X_{n,i}]\right]
	=\mathrm E\left[Z_{m}Z_{n-1}\right]\mathrm E[X]
	=\mu \mathrm E[Z_{m}Z_{n-1}].
	\end{align*}
	Iterating this gives rise to 
	\begin{align*}
	\mathrm E[Z_mZ_n]=\mu^{n-m} \mathrm E[Z_m^2] 
	\end{align*}
	for $n>m$.
	Plugging this in \eqref{2momenttotalprogeny}, we obtain
	\begin{align*}
	E[T^2]
	&=\sum_{n=0}^\infty \mathrm E[Z_n^2] 
	+2\sum_{m=0}^\infty \mu^{-m}\mathrm E[Z_m^2] \sum_{n=m+1}^\infty \mu^{n}
	\\&=\sum_{n=0}^\infty \mathrm E[Z_n^2]
	+\frac{2\mu}{1-\mu}\sum_{n=0}^\infty \mathrm E[Z_n^2]
	\\&=\frac{1+\mu}{1-\mu} \sum_{n=0}^\infty \mathrm E[Z_n^2].
	\end{align*}
	Together with \eqref{2momentgenerationsize}, we have
	\begin{align*}
	E[T^2]
	=\frac{1+\mu}{(1-\mu)^2} \sum_{n=0}^\infty (m_2\mu^{n-1}(1-\mu^n)-\mu^{n+1}+\mu^{2n}),
	\end{align*}
	which is finite, since $\mu<1$ holds by assumption. More precisely, we have
	\begin{align*}
	E[T^2]
	=\frac{1+\mu}{(1-\mu)^2}\left( \frac{m_2}{\mu(1-\mu)} - \frac{m_2}{\mu(1-\mu^2)} - \frac{\mu}{1-\mu} + \frac{1}{1-\mu^2} \right) = \frac{m_2-\mu^2-\mu+1}{(1-\mu)^3}.
	\end{align*}
\end{proof}

\subsection{Proof of Lemma \ref{lem:convinvariantmeasure}}

We first show the weak convergence of $\PPehat$ on $\F_m\otimes\G$ with $\F_m=\sigma(X_{-m},\dots,X_m)$. Note that since $\Pehat$ is the marginal law of $\omega$ under $\PPehat$, the weak convergence of $\Pehat$ is implied by the one of $\PPehat$. Let $f$ be a bounded and continuous function, measurable with respect to $\F_m\otimes\G$. 
Let us denote by $T_{| B_m(\rho)}$ the subgraph of $T$ consisting of the vertices of $T$ with graph distance to $\rho$ at most $m$, and the edges connecting those vertices. We have
\begin{align}
\left|\int f \,\D \PPehat - \int f\,\D\hat{\mathbb{P}}_0 \right| 
&\le  \left|\int f \mathds 1_{\{|T_{| B_m(\rho)}|\le M\}} \,\D \PPehat - \int f\mathds 1_{\{|T_{| B_m(\rho)}|\le M\}}\,\D\hat{\mathbb{P}}_0 \right| \label{1}\\
&\quad +  
\left|\int f \mathds 1_{\{|T_{| B_m(\rho)}|> M\}} \,\D \PPehat\right| 
+ \left|\int f \mathds 1_{\{|T_{| B_m(\rho)}|> M\}} \,\D \hat{\mathbb{P}}_0\right|.\notag
\end{align}
Using the Doob-Dynkin lemma we have $f((x_n)_{n\in \mathbb{Z}},\omega)=g((x_{-m},\dots,x_m),\omega)$ for some measurable function $g$. If $|T_{| B_m(\rho)}|\le M$ holds, the number of possible trajectories on $T_{| B_m(\rho)}$ of length $m$ is finite, and we can write the first integral of \eqref{1} as the finite sum
\begin{align} \label{finitetreefinitesum}
& \int f((x_n)_{n\in\mathbb Z},\omega) \mathds 1_{\{|T_{| B_m(\rho)}|\le M\}} \,\D \PPehat \notag \\
&\quad= 
\sum_{x_{-m},\dots,x_m} \int g((x_{-m},\dots,x_m),\omega) \mathds 1_{\{|T_{| B_m(\rho)}|\le M\}} \notag \\
& \quad \quad \quad \quad \times \frac{\pi(\rho)}{\gamma_\varepsilon\mathrm{deg}(\rho)} \hat P_\omega(X_{-m}=x_{-m},\dots, X_m=x_m) \,\D \mathrm{P}^\mathrm{aug}_\varepsilon(\omega) ,
\end{align} 
recall the definition of $\Pehat$ in \eqref{defPhat}. 

Under $\mathrm{P}_\varepsilon$, and also under $\mathrm{P}^\mathrm{aug}_\varepsilon$, the conductances are i.i.d. conditioned on the tree $T$, with marginal law $\mu_\varepsilon$. Since $\mu_\varepsilon\to \mu_0$ weakly, the weak convergence $\mathrm{P}_\varepsilon\to \mathrm{P}_0$ follows and then also $\mathrm{P}^\mathrm{aug}_\varepsilon\to \mathrm{P}^\mathrm{aug}_0$ by dominated convergence. This implies the convergence of the integrals in \eqref{finitetreefinitesum}, provided that the integrands are continuous and bounded functions. 
To verify this, we first look at the mapping $\omega \mapsto \frac{\pi(\rho)}{\mathrm{deg}(\rho)} \hat P_\omega(X_{-m}=x_{-m},\dots, X_m=x_m)$. 

Let $\omega^{(n)} = \left(T^{(n)},\rho^{(n)},\xi^{(n)}\right)$ be a sequence of environments that converges to $\omega=(T,\rho,\xi)$ for $n\to\infty$, that is, $\left(T^{(n)}_{| B_m(\rho^{(n)})},\rho^{(n)}\right)=(T_{| B_m(\rho)},\rho)$ for $n$ big enough and
\begin{align*}
\left(\xi^{(n)}(e)\right)_{e\in\calE(T_{| B_m(\rho)})}\to(\xi(e))_{e\in\calE(T_{| B_m(\rho)})}.
\end{align*} 
For a valid trajectory $x_{-m},\dots,x_m$ (in particular, $x_0=\rho$), we have
\begin{align*}
&\hat P_{\omega^{(n)}}(X_{-m}=x_{-m},\dots, X_m=x_m) 
\\&\quad=  P_{\omega^{(n)}}(X_0=x_0,\dots, X_m=x_m)
 P_{\omega^{(n)}}(X_0=x_0,\dots, X_{-m}=x_{-m})  
\\&\quad= \prod_{k=1}^m \frac{\xi^{(n)}(x_{k-1},x_k)}{\sum_{v\sim x_{k-1}} \xi^{(n)}(x_{k-1},v)} \prod_{k=1}^m \frac{\xi^{(n)}(x_{-(k-1)},x_{-k})}{\sum_{v\sim x_{-(k-1)}} \xi^{(n)}(x_{-(k-1)},v)} .
\end{align*}
Suppose that the sums $\sum_{v\sim x_{k-1}} \xi(x_{k-1},v)$ and $\sum_{v\sim x_{-(k-1)}} \xi^{(n)}(x_{-(k-1)},v)$ are strictly positive for all $k$ appearing in the products. In this case, due to the convergence of $\xi^{(n)}$ we easily obtain 
\begin{align*}
\frac{\pi(\rho^{(n)})}{\mathrm{deg}\left(\rho^{(n)}\right)} \hat P_{\omega^{(n)}}(X_{-m}=x_{-m},\dots, X_m=x_m)
\xrightarrow[n\to \infty]{}
\frac{\pi(\rho)}{\mathrm{deg}(\rho)} \hat P_\omega(X_{-m}=x_{-m},\dots, X_m=x_m) .
\end{align*} 
Now, suppose that there exists a $x_{k-1}$ along the path in positive time with \linebreak $\sum_{v\sim x_{k-1}} \xi(x_{k-1},v)=0$. We define 
\begin{align}
k_0=\min\bigg\{k:\sum_{v\sim x_{k-1}} \xi(x_{k-1},v)=0\bigg\}
\end{align}
as the first time that the sum is zero.  
If $k_0>1$, we notice that the conductance $\xi(x_{{k_0} -2}, x_{{k_0}-1})$ is equal to zero, which implies that $\xi^{(n)}(x_{k_0-2},x_{k_0-1})$ converges to zero. Using that by definition of $k_0$ the sum $\sum_{v\sim x_{k_0-2}} \xi(x_{k_0-2},v)$ is strictly positive, we obtain
\begin{align*}
0\le \prod_{k=1}^m \frac{\xi^{(n)}(x_{k-1},x_k)}{\sum_{v\sim x_{k-1}} \xi^{(n)}(x_{k-1},v)}
\le \frac{\xi^{(n)}(x_{k_0-2},x_{k_0-1})}{\sum_{v\sim x_{k_0-2}} \xi^{(n)}(x_{k_0-2},v)}\xrightarrow[n\to \infty]{} 0.
\end{align*}
If $k_0=0$, we have $\sum_{v\sim \rho} \xi(\rho,v)=\pi(\rho)=0$. In particular, we have $\xi^{(n)}(\rho,x_1)\to \xi(\rho,x_1)=0$ and therefore
\begin{align*}
0\le \frac{\pi(\rho^{(n)})}{\mathrm{deg}\left(\rho^{(n)}\right)} \prod_{k=1}^m \frac{\xi^{(n)}(x_{k-1},x_k)}{\sum_{v\sim x_{k-1}} \xi^{(n)}(x_{k-1},v)}
\le \frac{\pi(\rho^{(n)})}{\mathrm{deg}\left(\rho^{(n)}\right)} \frac{\xi^{(n)}(\rho,x_1)}{\sum_{v\sim \rho} \xi^{(n)}(\rho,v)}
= \frac{\xi^{(n)}(\rho,x_1)}{\mathrm{deg}\left(\rho^{(n)}\right)}
\xrightarrow[n\to \infty]{} 0 .
\end{align*}
In both cases
\small
\begin{align*}
\frac{\pi(\rho^{(n)})}{\mathrm{deg}\left(\rho^{(n)}\right)} \hat P_{\omega^{(n)}}(X_{-m}=x_{-m},\dots, X_m=x_m)
\xrightarrow[n\to \infty]{} 0 =
\frac{\pi(\rho)}{\mathrm{deg}(\rho)} \hat P_\omega(X_{-m}=x_{-m},\dots, X_m=x_m).
\end{align*}
\normalsize
Analogously, we obtain the same convergence result, if there exists a $x_{-(k-1)}$ along the path in negative time with $\sum_{v\sim x_{-(k-1)}} \xi(x_{-(k-1)},v) = 0$, which implies the continuity of the last two factors integrated in \eqref{finitetreefinitesum}. 

Moreover, the first $m$ generations of the trees $T^{(n)}$ and $T$, rooted at $\rho^{(n)}$ and $\rho$, respectively, are equal for $n$ large enough. This implies $\big\{\big|T^{(n)}_{|B_m(\rho^{(n)})}\big|\le M\big\}=\{|T_{| B_m(\rho)}|\le M\}$ for $n$ big enough. Thus, the function $\mathds 1_{\{|T\vert_{B_m(\rho)}|\le M\}}$ is continuous in $\omega=(T,\rho,\xi)$, 
which completes the proof of the continuity of the functions integrated in \eqref{finitetreefinitesum}.   

Using that also $\gamma_\varepsilon\to \gamma_0$, we obtain that
\small
\begin{align*}
&\lim_{\varepsilon\to 0} \int f((x_n)_{n\in \mathbb{Z}},\omega) \mathds 1_{\{|T\vert_{B_m(\rho)}|\le M\}} \,\D \PPehat
\\&= \sum_{x_{-m},\dots,x_m} \int g((x_{-m},\dots,x_m),\omega) \mathds 1_{\{T\vert_{B_m(\rho)}\le M\}} \frac{\pi(\rho)}{\gamma_0\mathrm{deg}(\rho)} \hat P_\omega(X_{-m}=x_{-m},\dots, X_m=x_m) \,\D \mathrm{P}^\mathrm{aug}_0
\\&=\int f((x_n)_{n\in \mathbb{Z}},\omega) \mathds 1_{\{|T\vert_{B_m(\rho)}|\le M\}} \,\D \hat{\mathbb{P}}_0 .
\end{align*}
\normalsize
This computation shows that the first summand in \eqref{1} converges to zero for $\varepsilon\to 0$. Let us now consider the second one. We have
\begin{align*}
\left|\int f \mathds 1_{\{|T_{| B_m(\rho)}|> M\}} \,\D \PPehat \right| 
\le \Vert f\Vert_{\infty} \mathrm{P}^\mathrm{aug}_\varepsilon(|T_{|B_m(\rho)}|> M).
\end{align*}
The last probability does not depend on $\varepsilon$, so according to the assumption of the locally finitness of $T$ we can bound this term by $\delta(M)$ independent of $\varepsilon$ with $\delta(M)\to 0$ for $M\to\infty$. For the third summand of \eqref{1} we obtain an analogous upper bound, which yields the weak convergence of $\PPehat$ on $\F_m\otimes\G$.\\

To show the weak convergence on $\F\otimes\G$ we follow the arguments of \cite[Theorem 2.2]{billingsley2013convergence}. To apply the Portmanteau Theorem, note that $\F\otimes\G$ is the Borel-$\sigma$-algebra on $\mathbb{T}^\mathbb{Z}\times\Omega$ and $\bigcup_m\mathcal{F}_m$ contains a basis of the topology on $\mathbb{T}^\mathbb{Z}$. 

Let $G=G_1\times G_2\in\F\otimes\G$ be an open set. Hence, $G_1$ is open and we can write it as $G_1=\bigcup_{i=1}^\infty A_i$ for some $A_i\in\bigcup_m\F_m$. By continuity of probability measures we have
\begin{align*}
\hat{\mathbb{P}}_0 \left(\bigcup_{i=1}^N A_i\times G_2\right) \xrightarrow[N\to\infty]{} \hat{\mathbb{P}}_0 \left(G_1\times G_2\right) .
\end{align*}
Given an arbitrary $\delta>0$ we can choose $r\in \N$, such that
\begin{align}
\hat{\mathbb{P}}_0 \left(\bigcup_{i=1}^r A_i\times G_2\right) >\hat{\mathbb{P}}_0 \left(G_1\times G_2\right) -\delta. \label{2}
\end{align}
We obtain
\begin{align*}
\liminf_{\varepsilon\to 0} \PPehat(G_1\times G_2)
&\ge \liminf_{\varepsilon\to 0} \PPehat \left(\bigcup_{i=1}^r A_i\times G_2\right)
\\&\ge \hat{\mathbb{P}}_0 \left(\bigcup_{i=1}^r A_i\times G_2\right)
\\& > \hat{\mathbb{P}}_0 \left(G_1\times G_2\right) -\delta ,
\end{align*}
where the second inequality follows from the weak convergence of $\PPehat$ on $\F_m\otimes\G$ and the Portmanteau-Theorem and \eqref{2} was used for the last inequality. Since $\delta>0$ was arbitrary we have 
\begin{align}
\liminf_{\varepsilon\to 0} \PPehat (G)\geq \hat{\mathbb{P}}_0 \left( G\right)
\end{align}
for all open sets $G\in\F\otimes\G$, that is, $\PPehat$ converges weakly to $\hat{\mathbb{P}}_0$. 
\hfill $\Box$

\bibliographystyle{alpha}
\bibliography{bibGWT}

\bigskip

{\footnotesize
\noindent
TU Dortmund, \\
Fakult\"at f\"ur Mathematik, \\
Vogelpothsweg 87, 
44227 Dortmund, 
Germany, \\
tabea.glatzel@tu-dortmund.de\\
jan.nagel@tu-dortmund.de
}

\end{document}